\theoremstyle{plain}
\newtheorem{theorem}{Theorem}[section]
\newtheorem{pred}[theorem]{Proposition}
\newtheorem{ass}[theorem]{Assumption}
\newtheorem{lem}[theorem]{Lemma}
\theoremstyle{definition}
\newtheorem{defin}[theorem]{Definition}
\newtheorem{remark}[theorem]{Remark}
\newcommand{\ft}{\operatorname{ft}}
\def\ig#1#2#3#4{\begin{figure}[!ht]\begin{center}%
\includegraphics[height=#2\textheight]{pictures//#1.png}\caption{#4}\label{#3}%
\end{center}\end{figure}}
\title{The Fermat--Torricelli problem in normed spaces}
\author{Daniil A. Ilyukhin}
\date{}
\begin{document}

\maketitle

\begin{abstract}
The article studies a generalization of the classical Fermat--Torricelli problem to normed spaces of arbitrary finite dimension. Necessary and sufficient conditions for the uniqueness of the solution of the Fermat--Torricelli problem for any $n$ points in a fixed space are obtained, and more precise conditions for normed planes and three-dimensional spaces are presented. In addition, examples of the application of the criterion in the norms given by regular polyhedra are given.
\end{abstract}

%%%%%%%%%%%%%%%%%%%%%%%%%%%%%%
\section{Introduction}
%%%%%%%%%%%%%%%%%%%%%%%%%%%%%%

The problem of finding a point that minimizes the sum of distances from it to a given set of points in a metric space was first mentioned in the 17th century. In 1643 Fermat posed a problem for three points on the Euclidean plane, and in the same century Torricelli proposed a solution to this problem (\cite{History}).

\ig{pic4}{0.32}{fig:pic4}{The design proposed by Torricelli. The point $T$ is the solution of the problem for the given points $A,B,C$ (\cite{Torricelli})}

Since then, various generalizations of this problem have been considered. The problem was formulated for an arbitrary number of points, the dimension of the space, as well as the norm given in this space. The simplicity of the formulation allows us to consider the problem even in an arbitrary metric space. For example, the problem for four points on the Euclidean plane was solved by D. Fagnano (\cite{bib4}, \cite{bib5}). And for the case of five points, it was proved that the problem is unsolvable in radicals, the proof is given in \cite{bib6} and \cite{bib7}. In addition, there is a generalized problem in which the vertices are considered together with some positive values, called weights. You can read about the development of the weighted problem in the works \cite{bib8}, \cite{arhiv1}, \cite{arhiv2}. In particular, the existence and uniqueness of the solution of such a problem for three points on the Euclidean plane were proved (\cite{bib5}).

This article will consider the classic version of the problem: finding a point for which the minimum sum of distances to elements of a subset of a metric space is reached. We will call such a formulation \emph{generalized Fermat--Torricelli problem} (or simply \emph{Fermat--Torricelli problem}). The work is based on the article \cite{FTproblem}, which describes the application of a geometric approach to the problem and presents some new results that are obtained in the framework of real finite-dimensional normed spaces, called \emph{Minkowski spaces}.

The purpose of this work is to find a necessary and sufficient condition for the uniqueness of the solution of the Fermat--Torricelli problem for any $n$ points in an arbitrary Minkowski space.

I would like to express my gratitude to my scientific adviser, Doctor of Physical and Mathematical Sciences Professor A.A.Tuzhilin and Doctor of Physical and Mathematical Sciences Professor A.O.Ivanov for posing the problem and constant attention to the work.

%%%%%%%%%%%%%%%%%%%%%%%%%%%%%%
\section{The Fermat--Torricelli problem and solving methods}
%%%%%%%%%%%%%%%%%%%%%%%%%%%%%%

All statements in this section will be formulated for the Minkowski space, so this will not be specified.

\begin{defin}
A point $x_0$ is called a \emph{Fermat--Torricelli point} for points $A=\{x_1,\ldots,x_n\}$ if $x=x_0$ minimizes $\sum_{i=1}^n| xx_i|$. The set of all such points will be denoted by $\ft(A)$.
\end{defin}

From the properties of the function $\sum_{i=1}^n|xx_i|$, one can obtain the following assertion about the set of solutions of the Fermat--Torricelli problem, see for example \cite{bib10}.

\begin{pred}
Let $A = \{x_1$,\ldots, $x_n\}$ be points in space. Then $\ft(A)$ is a non-empty, compact and convex set.
\end{pred}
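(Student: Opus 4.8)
The plan is to realize $\ft(A)$ as the set of global minimizers of the convex function
\[
f(x)=\sum_{i=1}^{n}|xx_i|,
\]
and to read off all three asserted properties from two elementary features of $f$: its continuity together with convexity, and its coercivity. Writing $\|\cdot\|$ for the norm, so that $|xy|=\|x-y\|$, I first note that each summand $x\mapsto|xx_i|=\|x-x_i\|$ is continuous and convex: convexity holds because the norm is a convex function and $x\mapsto x-x_i$ is affine, and a sum of finitely many convex functions is convex. Hence $f$ is continuous and convex on the whole space.

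Next I would establish coercivity, i.e.\ that $f(x)\to\infty$ as $\|x\|\to\infty$. Fixing the origin $o$ of the vector space, the reverse triangle inequality gives $|xx_1|=\|x-x_1\|\ge\|x\|-\|x_1\|$, whence $f(x)\ge\|x\|-\|x_1\|\to\infty$. Consequently every sublevel set $\{x:f(x)\le c\}$ is bounded; being also closed (by continuity of $f$) and living in a finite-dimensional space, it is compact by the Heine--Borel theorem. Choosing any $c$ for which this sublevel set is non-empty and applying the Weierstrass extremum theorem, $f$ attains a global minimum $m$ on this compact set, which is therefore a global minimum over the whole space. This already yields $\ft(A)\neq\varnothing$.

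Finally, writing $m=\min f$, I observe that $\ft(A)=\{x:f(x)=m\}=\{x:f(x)\le m\}$, the last equality holding because $m$ is the minimum value. Thus $\ft(A)$ is itself a sublevel set of $f$, so by the preceding paragraph it is closed and bounded, hence compact. Convexity is immediate from convexity of $f$: if $x,y\in\ft(A)$ and $t\in[0,1]$, then $f(tx+(1-t)y)\le t f(x)+(1-t)f(y)=m$, and since $m$ is the minimal value this forces $f(tx+(1-t)y)=m$, i.e.\ $tx+(1-t)y\in\ft(A)$.

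I do not expect a serious obstacle here, as the statement is a standard consequence of convex-analytic facts; the only places where the Minkowski (finite-dimensional normed) structure is genuinely used are the coercivity estimate, which relies solely on the reverse triangle inequality valid for any norm, and the passage from ``closed and bounded'' to ``compact'', which relies on finite dimensionality. It is worth stressing that $f$ need not be \emph{strictly} convex, which is precisely why $\ft(A)$ may be larger than a single point --- the phenomenon that the uniqueness question of this paper is concerned with.
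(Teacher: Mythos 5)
Your proof is correct and complete. The paper itself does not prove this proposition --- it simply cites it to the literature (Cieslik) as a consequence of ``the properties of the function $\sum_{i=1}^n|xx_i|$'' --- and the argument you give (continuity and convexity of $f$, coercivity via the reverse triangle inequality, compactness of sublevel sets in finite dimension, Weierstrass for existence, and the identification of $\ft(A)$ with a sublevel set for compactness and convexity) is exactly the standard argument those properties encode, so there is nothing to compare against and nothing missing.
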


We give two examples of solving the Fermat--Torricelli problem on a plane. The figure \ref{fig:pic5} shows the vertices of an equilateral triangle, first in the Euclidean plane, and then in the norm given by a regular hexagon. In the first case, the set of solutions contains a single point constructed in such a way that the angles between the rays coming out of it in the direction of the vertices of the triangle are equal. In the second case, we specify the location of the points of the given set: let one of them be at the origin, and the other two --- at neighboring vertices of the unit circle. Under such conditions, the set of solutions will include all points of the constructed triangle, including the interior and boundaries.

\ig{pic5}{0.21}{fig:pic5}{Examples of solutions to the Fermat--Torricelli problem on the Euclidean plane and on the $\lambda$-plane}

Now the geometric method for constructing the solution of the Fermat--Torricelli problem will be presented. To use it, along with the original space $X$, one must consider its dual space $X^*$, which consists of linear functionals.

\begin{defin}
A functional $\varphi\in X^*$ is called \emph{norming for a vector} $x\in X$ if $\|\varphi\|=1$ and $\varphi(x)=\|x\|$ .
\end{defin}

It is easy to see that the elements of the space $X$ obtained by multiplying the vector $x$ by a positive number $k$ have the same set of norming functionals. That is, the set of such functionals can be described using points of the unit sphere. Consider norming functionals on some normed plane.

The figure \ref{fig:pic6} shows a section of the unit circle $S$ containing both a flattened and a smooth section. Let us construct norming functionals for vectors starting at zero and ending at a point lying on $S$. At the point $z$ the unit circle has a single support line, then for the corresponding vector there is a unique norming functional $\varphi_4$, its level line coincides with this support line. Internal flattening points $xy$ also correspond to vectors with a unique norming functional. At the point $x$ the unit circle has more than one reference line. Each of them defines a norming functional, that is, for a vector with an end in $x$, there are infinitely many norming functionals. The functionals $\varphi_1$ and $\varphi_2$ are given by the limit positions of the reference line, while $\varphi_3$ is an arbitrary one.

\ig{pic6}{0.25}{fig:pic6}{Construction of norming functionals along the support lines to the unit circle}

The following theorem is a criterion for a certain point to belong to the set of solutions of the Fermat--Torricelli problem.

\begin{theorem}[\cite{bib11}, \cite{Extreme4}]\label{thm:ftpoint}
Let $x_0$, $x_1$, ..., $x_n$ be points in space and $x_0 \neq x_i$ for $i=1,...,n$. Then $x_0$ is a Fermat--Torricelli point for $A=\{x_1,...,x_n\}$ if and only if each vector $x_i-x_0$, $i=1,.. .,n$, has a norming functional $\varphi_i$ such that $\sum_{i=1}^n\varphi_i=0$.
\end{theorem}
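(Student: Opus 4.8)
The plan is to characterize the Fermat–Torricelli points by the first-order optimality condition for the convex function $f(x) = \sum_{i=1}^n \|x - x_i\|$, using the subdifferential calculus. Since $f$ is convex, $x_0$ minimizes $f$ if and only if $0 \in \partial f(x_0)$, and by additivity of the subdifferential (the summands being finite convex functions everywhere) this is equivalent to $0 \in \sum_{i=1}^n \partial g_i(x_0)$, where $g_i(x) = \|x - x_i\|$. So the heart of the matter is to identify the subdifferential of the norm composed with a translation.

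First I would recall, or establish directly, the description of the subdifferential of the norm. For a nonzero vector $v$, the subdifferential $\partial \|\cdot\|(v)$ consists exactly of those functionals $\varphi \in X^*$ with $\|\varphi\| = 1$ and $\varphi(v) = \|v\|$ — that is, precisely the norming functionals of $v$. This follows from the convexity inequality $\|w\| \geq \|v\| + \varphi(w - v)$: plugging in $w = 2v$ and $w = 0$ forces $\varphi(v) = \|v\|$, then the general inequality together with $\varphi(v) = \|v\|$ gives $\varphi(w) \leq \|w\|$ for all $w$, hence $\|\varphi\| \leq 1$, while $\varphi(v) = \|v\|$ with $\|v\| \neq 0$ forces $\|\varphi\| \geq 1$. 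Conversely any norming functional clearly satisfies the subgradient inequality. Applying the chain rule for the translation $x \mapsto x - x_i$ (which is trivial since the differential of a translation is the identity), we get that $\partial g_i(x_0)$ is the set of norming functionals of $x_i - x_0$. Here the hypothesis $x_0 \neq x_i$ is what guarantees $x_i - x_0 \neq 0$, so each $g_i$ is differentiable-up-to-the-norming-set at $x_0$ and no degenerate subdifferential-of-a-norm-at-zero appears.

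Assembling these pieces, $x_0$ is a Fermat–Torricelli point iff $0 \in \partial f(x_0) = \sum_{i=1}^n \{\varphi : \varphi \text{ norming for } x_i - x_0\}$, which says exactly that one can choose a norming functional $\varphi_i$ for each $x_i - x_0$ with $\sum_{i=1}^n \varphi_i = 0$. That is the claimed criterion, so the two directions come out simultaneously from the Minkowski-sum description of the subdifferential.

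The main obstacle is justifying the additivity $\partial(\sum_i g_i) = \sum_i \partial g_i$ and the equivalence of $0 \in \partial f(x_0)$ with global minimality. The minimality equivalence is the standard convex-analysis fact ($f$ convex $\Rightarrow$ $x_0$ minimizes $f$ iff $0$ is a subgradient), which I would simply cite or prove in one line from the subgradient inequality. The sum rule for subdifferentials holds without qualification constraints precisely because each $g_i$ is a finite, continuous (indeed Lipschitz) convex function on all of the finite-dimensional space $X$, so the Moreau–Rockafellar theorem applies with no relative-interior hypotheses to check; I would state this and cite a standard reference rather than reprove it. An alternative, more self-contained route that avoids quoting the sum rule is to work directly: for the ``if'' direction, the existence of norming functionals summing to zero yields, via the subgradient inequalities summed over $i$, that $f(x) \geq f(x_0)$ for all $x$; for the ``only if'' direction, one analyzes the one-sided directional derivatives of $f$ at a minimizer, shows they are nonnegative in every direction, and converts this into the existence of the required norming functionals by a separation/Hahn–Banach argument. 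I would present the directional-derivative version if I wanted the proof to be elementary, since that is where the finite-dimensional geometry and the structure of norming functionals interact most transparently.
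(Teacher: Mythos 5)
The paper does not prove this theorem at all: it is imported as a known result from the cited references (Durier--Michelot and Ivanov--Tuzhilin), so there is no in-paper argument to compare against. Your proposal is correct and is essentially the standard proof found in those sources: identify $\partial\|\cdot\|(v)$ for $v\neq 0$ with the set of norming functionals of $v$, apply the Moreau--Rockafellar sum rule (valid without qualification since each summand is finite and continuous on all of $X$), and read off the criterion from $0\in\partial f(x_0)$. The one hypothesis you correctly flag, $x_0\neq x_i$, is exactly what keeps each subdifferential from degenerating to the full dual unit ball, and your fallback via directional derivatives and Hahn--Banach is a legitimate self-contained alternative; no gaps.
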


Let us give the simplest example of using this theorem on the Euclidean plane. Let's consider three points on a circle, located at an equal distance from each other, and suppose that the origin of coordinates is the solution of the Fermat--Torricelli problem for them. For each of the points $x_1,x_2,x_3$ there is a unique support line, which is the level line $\varphi_i=1$ of some functional. The three constructed lines form an equilateral triangle and are equidistant from the origin, therefore, the sum of the functionals they define is equal to zero. By the \ref{thm:ftpoint} theorem, the point $p=0$ belongs to the set of solutions.

\ig{pic8}{0.27}{fig:pic8}{The origin of coordinates belongs to the solution set for points $x_1,x_2,x_3$}

Assume that a point is found that is a solution for the set $A$. Now, using the functionals from the \ref{thm:ftpoint} theorem, we can construct the entire set $\ft(A)$. To do this, we introduce a new object.

\begin{defin}
Let a functional $\varphi\in X^*$ and a point $x\in X$ be given. Define \emph{cone} $C(x,\;\varphi)=x-\bigl\{ a:\;\varphi(a)=\|a\|\bigr\}$.
\end{defin}

Let's consider two examples of constructing a cone on the Manhattan plane (figure \ref{fig:pic9}). Let the functional $\varphi_1$ be given by the support line intersecting the unit circle at one point, and $\varphi_2$ be the line containing the flattening. The set $\bigl\{ a:\;\varphi_1(a)=\|a\|\bigr\}$ is a ray that leaves the origin and passes through this point. For the second functional, this will be a whole set of rays intersecting all flattening points. In both cases, we then reflect the constructed set relative to the origin of coordinates and shift it by parallel translation so that the vertex hits the given point.

\ig{pic9}{0.28}{fig:pic9}{Construction of cones on the Manhattan plane}

\begin{theorem}[\cite{bib11}]\label{thm:ftlocus}
Let $A = \{x_1$, ..., $x_n\}$ be points in space and $p\in \ft(A)\setminus A$. By the \ref{thm:ftpoint} theorem, for each vector $x_i-p$, $i=1,...,n$, there exists a norming functional $\varphi_i$ such that $\sum_{i=1}^n \varphi_i=0$. Then $\ft(A)=\cap_{i=1}^n C(x_i,\;\varphi_i)$.
\end{theorem}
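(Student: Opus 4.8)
The plan is to work with the objective function $f(x)=\sum_{i=1}^n\|x-x_i\|$ and to exploit the condition $\sum_{i=1}^n\varphi_i=0$ to manufacture a \emph{constant} lower bound for $f$ that is attained exactly on the intersection of the cones. First I would record the elementary reformulation of cone membership: since $\|\varphi_i\|=1$ forces $\varphi_i(a)\le\|a\|$ for every $a$, a point $q$ lies in $C(x_i,\varphi_i)$ precisely when $\varphi_i(x_i-q)=\|x_i-q\|$, that is, when $\varphi_i$ is a norming functional for $x_i-q$ (the degenerate case $q=x_i$ being automatic, as $\varphi_i(0)=0=\|0\|$).

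The key step is the following affine bound. For \emph{any} point $x$,
\[
f(x)=\sum_{i=1}^n\|x_i-x\|\ge\sum_{i=1}^n\varphi_i(x_i-x)=\sum_{i=1}^n\varphi_i(x_i)-\Bigl(\sum_{i=1}^n\varphi_i\Bigr)(x)=\sum_{i=1}^n\varphi_i(x_i),
\]
where the inequality uses $\|\varphi_i\|=1$ term by term and the last equality uses $\sum_{i=1}^n\varphi_i=0$. Thus $f$ is bounded below by the constant $c:=\sum_{i=1}^n\varphi_i(x_i)$. Evaluating at $x=p$ and using that each $\varphi_i$ is norming for $x_i-p$ shows $f(p)=\sum_{i=1}^n\varphi_i(x_i-p)=c$, so $c$ is exactly the minimal value of $f$ and the bound is tight at $p$.

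With this bound in hand both inclusions follow by the same mechanism. If $q\in\ft(A)$ then $f(q)=c$, so the chain of inequalities above collapses to equality; since each summand satisfies $\|x_i-q\|-\varphi_i(x_i-q)\ge 0$, the vanishing of their sum forces every term to vanish, i.e. $\varphi_i(x_i-q)=\|x_i-q\|$ for all $i$, whence $q\in\bigcap_{i=1}^n C(x_i,\varphi_i)$. Conversely, if $q\in\bigcap_{i=1}^n C(x_i,\varphi_i)$ then $\varphi_i(x_i-q)=\|x_i-q\|$ for each $i$, and summing gives $f(q)=\sum_{i=1}^n\varphi_i(x_i-q)=c$, the minimum, so $q\in\ft(A)$.

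The main obstacle is conceptual rather than computational: spotting that $\sum_{i=1}^n\varphi_i=0$ is precisely what turns the linear minorant $\sum_{i=1}^n\varphi_i(x_i-x)$ of the convex objective into a constant, thereby serving as an optimality certificate. Once this is seen, the remainder is the standard ``equality in a sum of one-sided inequalities forces termwise equality'' argument, and the degenerate cases $q=x_i$ need no separate treatment because the reformulation of cone membership holds trivially there. I would also note that routing the converse inclusion through this bound, rather than through Theorem \ref{thm:ftpoint}, conveniently avoids that theorem's hypothesis $x_0\neq x_i$.
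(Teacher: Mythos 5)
Your proof is correct and complete. The paper itself does not prove this theorem --- it is quoted from Durier and Michelot \cite{bib11} --- so there is no internal argument to compare against; what you have written is the standard convex-duality certificate, and it is exactly the right one. The chain $f(x)=\sum_i\|x_i-x\|\ge\sum_i\varphi_i(x_i-x)=\sum_i\varphi_i(x_i)$ (constant because $\sum_i\varphi_i=0$), tightness at $p$, and the ``equality forces termwise equality'' step together give both inclusions cleanly, and your observations that cone membership $q\in C(x_i,\varphi_i)$ is equivalent to $\varphi_i(x_i-q)=\|x_i-q\|$ and that the case $q=x_i$ is covered automatically (so the hypothesis $x_0\neq x_i$ of Theorem \ref{thm:ftpoint} is never needed in the converse direction) close the only loopholes a careless version of this argument might leave open.
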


The \ref{thm:ftpoint} and \ref{thm:ftlocus} theorems constitute a geometric method for finding a solution to the Fermat--Torricelli problem. To describe the application of this theorem, let's consider in more detail the example with the vertices of an equilateral triangle in the hexagonal norm (figure \ref{fig:pic7}).

First, let's find at least one solution. Take $p=\frac{1}{3}(x_1+x_2+x_3)$ and use the \ref{thm:ftpoint} theorem to prove that $p\in\ft(A)$. To do this, consider the vectors $x_1-p,x_2-p,x_3-p$ and construct their norming functionals. Since they lie in the same directions with internal flattening points, then for each of the vectors $x_i-p$ there is exactly one functional $\varphi_i$ whose level line contains the corresponding flattening. The flattenings are equidistant from the origin and form an equilateral triangle; therefore, the sum of the functionals constructed from them is equal to zero, and the condition of the theorem is satisfied.

\ig{pic7}{0.27}{fig:pic7}{Construction of a complete set of solutions for points $x_1,x_2,x_3$ on a plane with hexagonal norm}

Now let's use the \ref{thm:ftlocus} theorem to find all solutions. Let us construct a cone given by the functional $\varphi_2$ and the vector $x_2-p$. Since the functional is norming for all flattening points, the set $\bigl\{ a:\;\varphi_2(a)=\|a\|\bigr\}$ is a set of rays emanating from the origin and passing through the flattening points , that is, an angle whose sides contain two adjacent vertices. Now we will reflect the angle relative to the origin and perform a parallel translation so that the vertex of the angle is at the point $x_2$. After a similar construction of two other cones, we obtain that their intersection is the entire triangle $x_1x_2x_3$.

\begin{remark}
The statement of \ref{thm:ftlocus} theorem does not depend on the choice of the point $p$ and the functionals $\varphi_{i}$.
\end{remark}

The geometric method gives a general description of the solutions of the Fermat--Torricelli problem for various given sets - this is the intersection of some cones with vertices located at the points of this set. In the case of a plane, this observation allows us to formulate the following statements:

\begin{pred}[\cite{FTproblem}]\label{thm:oddset}
Let in space the points of the set $A = \{x_1$, ..., $x_{2k+1}\}$ be located on one straight line in the order of their numbering. Then $\ft(A)=\{x_{k+1}\}$. If $A = \{x_1$, ..., $x_{2k}\}$, then $\ft(A)=\overline{x_kx_{k+1}}$.
\end{pred}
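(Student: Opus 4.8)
The plan is to treat the two parities separately: a symmetric pairing of the points handles the odd case completely, while the cone description of $\ft(A)$ from Theorem~\ref{thm:ftlocus} is the right tool for the even case. Throughout I fix a unit vector $v$ along the line, write $x_i=a_iv$ with $a_1<\cdots<a_n$ (so that $\|x_i-x_j\|=|a_i-a_j|$), and set $f(x)=\sum_{i=1}^{n}\|x-x_i\|$.

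For the odd case $n=2k+1$, pair $x_i$ with $x_{2k+2-i}$ for $i=1,\dots,k$ and leave $x_{k+1}$ unpaired. For any $x$ the triangle inequality gives $\|x-x_i\|+\|x-x_{2k+2-i}\|\ge\|x_i-x_{2k+2-i}\|$, so
\[
f(x)\ \ge\ \Bigl(\sum_{i=1}^{k}\|x_i-x_{2k+2-i}\|\Bigr)+\|x-x_{k+1}\|\ =:\ C+\|x-x_{k+1}\|.
\]
Since $x_{k+1}$ lies between $x_i$ and $x_{2k+2-i}$ on the line, equality holds in every pair at $x=x_{k+1}$, whence $f(x_{k+1})=C$. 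Thus $f(x)\ge C+\|x-x_{k+1}\|\ge f(x_{k+1})$, and equality forces $\|x-x_{k+1}\|=0$. This proves $\ft(A)=\{x_{k+1}\}$ with no further work and, notably, with no hypothesis on the unit ball.

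For the even case $n=2k$ I would first run the analogous pairing, matching $x_i$ with $x_{2k+1-i}$, to obtain the lower bound $f\ge C:=\sum_{i=1}^{k}\|x_i-x_{2k+1-i}\|$, and to check that every point of the straight segment $\overline{x_kx_{k+1}}$ lies between $x_i$ and $x_{2k+1-i}$ for all $i$ and hence attains $f=C$; this gives the inclusion $\overline{x_kx_{k+1}}\subseteq\ft(A)$. For the reverse inclusion I would invoke the geometric method. Choose $p=sv$ with $a_k<s<a_{k+1}$ (so $p\notin A$) and let $\psi$ be a norming functional for $v$. Then $\psi$ norms each $x_i-p$ with $i\ge k+1$ while $-\psi$ norms each $x_i-p$ with $i\le k$, and these $2k$ functionals sum to $0$; Theorem~\ref{thm:ftpoint} confirms $p\in\ft(A)$ and Theorem~\ref{thm:ftlocus} yields
\[
\ft(A)=\Bigl(\bigcap_{i\le k}(x_i+K)\Bigr)\cap\Bigl(\bigcap_{i\ge k+1}(x_i-K)\Bigr),\qquad K:=\bigl\{\,a:\psi(a)=\|a\|\,\bigr\}.
\]

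The main obstacle is identifying this intersection with $\overline{x_kx_{k+1}}$, and this is exactly where the shape of the unit ball enters. If $K$ is the single ray $\{tv:t\ge 0\}$, then $x_i+K$ and $x_i-K$ are half-lines along the line, and the intersection collapses to $\{sv:a_k\le s\le a_{k+1}\}=\overline{x_kx_{k+1}}$, as claimed. The delicate point is that $K$ is the cone over the face of the unit ball exposed by $\psi$, so it degenerates to a ray only when that face reduces to the single point $v$; when the ball is flat in directions transverse to $v$ the cone $K$ is genuinely larger and the intersection becomes a fatter region. Consequently the decisive step is to argue that $\psi$ (equivalently the direction $v$) can be chosen, or is forced by the standing hypotheses, so that the exposed face is the ray through $v$. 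I expect this strict-convexity-along-$v$ condition to be where the real content lies, and any restriction the statement requires on the norm should surface precisely here.
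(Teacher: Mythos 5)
The paper does not prove this proposition --- it is quoted from \cite{FTproblem} without proof --- so your argument can only be judged on its own. Your odd-case proof is correct and complete: the pairing bound $f(x)\ge C+\|x-x_{k+1}\|$ together with $f(x_{k+1})=C$ simultaneously shows that $x_{k+1}$ is a Fermat--Torricelli point and that it is the only one, in every norm and with no duality machinery. This is also the only part of the proposition the paper ever uses (in the proof of Theorem~\ref{thm:criterianx2} and in the octahedron proposition, both times for odd $n$).

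For the even case you prove $\overline{x_kx_{k+1}}\subseteq\ft(A)$ and then stop exactly where the real issue is: whether the cone $K=\{a:\psi(a)=\|a\|\}$ is a single ray. Your suspicion that a hypothesis on the norm is needed here is correct, and in fact the stated equality $\ft(A)=\overline{x_kx_{k+1}}$ is false in general. In the plane with the maximum norm take $x_1=(-2,0)$, $x_2=(-1,0)$, $x_3=(1,0)$, $x_4=(2,0)$: the minimum value of $f$ is $\|x_1-x_4\|+\|x_2-x_3\|=6$, and the point $(0,\tfrac12)$ attains it, since $2+1+1+2=6$; hence $\ft(A)$ is two-dimensional and strictly contains $\overline{x_2x_3}$. (Equivalently, the unique norming functional $\psi$ of $v=(1,0)$ satisfies $\{a:\psi(a)=\|a\|\}=\{(x,y):x\ge|y|\}$, a full two-dimensional cone, so the intersection in your display is the lens $(x_2+K)\cap(x_3-K)$ rather than a segment.) Your cone computation does establish the claimed equality whenever the direction $v$ of the line is an exposed point of the unit sphere --- take $\psi$ to be the exposing functional, so that $K$ is the ray through $v$ --- but without such a hypothesis only the inclusion $\supseteq$ survives. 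So the unresolved step you flagged is not a defect of your argument; it is a missing hypothesis in the statement as quoted, and you should either add that hypothesis or weaken the even-case conclusion to an inclusion.
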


%%%%%%%%%%%%%%%%%%%%%%%%%%%%%%
\section{Uniqueness criterion}
%%%%%%%%%%%%%%%%%%%%%%%%%%%%%%

%%%%%%%%%%%%%%%%%%%%%%%%%%%%%%%%%
\subsection{Criterion for $n$ points in a space of dimension $d$}
%%%%%%%%%%%%%%%%%%%%%%%%%%%%%%%

Let $X$ be a Minkowski space of dimension $d$. In $X$ we pose the Fermat--Torricelli problem for $n$ points. If $x_i\in X, 1\leq i\leq n$, then the solution is the set $\ft(x_1,\ldots,x_n)$, which includes all points $x\in X$ at which the minimum of the function $\sum_{i=1}^n|xx_i|$.

Consider the unit sphere $S$ of the space $X$.

\begin{defin}
\textit{Face} of a unit sphere is its intersection with some supporting hyperplane. If the linear span of the points of a face is a subspace $X$ of dimension $k$, then such a face is called \textit{$(k-1)$-dimensional}.
\end{defin}

\begin{defin}\label{thm:goodset}
Take a finite set of faces of the unit sphere $S$ of the Minkowski space. For each of them, we choose a supporting hyperplane $\pi_i$ that intersects $S$ only along this face. Let the hyperplane $\pi_i$ define the level surface $\varphi_i=1$ of some linear functional $\varphi_i$. If there is a set of supporting hyperplanes such that the sum of the constructed functionals is equal to zero, then we will call such a set of faces \emph{consistent}.
\end{defin}

It follows from the definition that if we take one point from each face of a consistent set, then we obtain a set of points for which $x=0$ is one of the solutions to the Fermat--Torricelli problem. Depending on the dimensions of the faces included in the set and their mutual arrangement, it is possible to obtain various types of solutions, including unique and non-unique ones. It turns out that the problem of finding in the space $X$ a set of points with a non-unique solution is equivalent to the existence in this space of a consistent set of faces with certain properties.

\begin{theorem}\label{thm:criteria}
Let $X$ be a Minkowski space of dimension $d$.

If $n\geq 3$ is odd, then in the space $X$ there are $n$ points for which the solution of the Fermat--Torricelli problem is nonunique if and only if $X$ has a consistent set of $n$ faces of the unit sphere $S$ for which the following condition is satisfied:
\begin{itemize}
  \item The intersection of the linear spans of all faces included in this consistent set contains a line.
\end{itemize}

If $n\geq 4$ is even, then in the space $X$ the solution of the Fermat--Torricelli problem is unique for any $n$ points that do not lie on one straight line if and only if the unit sphere is strictly convex.

\end{theorem}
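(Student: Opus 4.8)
The plan is to reduce both parts to the cone description of the solution set furnished by Theorem \ref{thm:ftpoint} and Theorem \ref{thm:ftlocus}, and to isolate one geometric lemma. Fix $p\in\ft(A)\setminus A$ with norming functionals $\varphi_i$, so that $\ft(A)=\bigcap_i C(x_i,\varphi_i)$, and let $F_i=S\cap\{\varphi_i=1\}$ be the face cut out by $\varphi_i$. The lemma asserts that for a nonzero $v$ the segment $\{p+sv:|s|\le\varepsilon\}$ lies in $\ft(A)$ for some $\varepsilon>0$ if and only if $v\in\bigcap_i\operatorname{span}(F_i)$. The forward implication is the computational heart: writing $a_i(s)=x_i-p-sv$, the condition $p+sv\in C(x_i,\varphi_i)$ reads $\varphi_i(a_i(s))=\|a_i(s)\|$, i.e. $a_i(s)$ lies in the norming cone of $\varphi_i$, which is the positive hull of $F_i$ and hence lies in $\operatorname{span}(F_i)$; subtracting $a_i(0)$ then gives $sv\in\operatorname{span}(F_i)$. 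Conversely, choosing the representative $x_i$ in $\operatorname{relint}(F_i)$, the tangent cone of the positive hull of $F_i$ at $x_i$ is all of $\operatorname{span}(F_i)$, so any $v\in\operatorname{span}(F_i)$ keeps $a_i(s)$ in the norming cone for small $|s|$ of either sign. I expect this tangent-cone identification to be the main obstacle, since it must be justified uniformly over faces of every dimension and for both directions of $s$.

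Granting the lemma, Part 1 for odd $n$ follows in both directions. For sufficiency, given a consistent set of $n$ faces with $v\in\bigcap_i\operatorname{span}(F_i)$, pick $x_i\in\operatorname{relint}(F_i)$; since each $\varphi_i$ is norming for $x_i$ and $\sum_i\varphi_i=0$, Theorem \ref{thm:ftpoint} gives $0\in\ft(A)$, and as $\|x_i\|=1$ we have $0\notin A$, so Theorem \ref{thm:ftlocus} applies and the lemma produces a genuine segment $\{sv\}\subset\ft(A)$. For necessity, if $\ft(A)$ is not a single point then, being convex with infinitely many points, it contains a nondegenerate segment meeting $\ft(A)\setminus A$; taking $p$ in the relative interior of such a segment and $v$ along it, Theorem \ref{thm:ftpoint} supplies functionals with $\sum_i\varphi_i=0$, the faces $F_i=S\cap\{\varphi_i=1\}$ form a consistent set, and the lemma yields $v\in\bigcap_i\operatorname{span}(F_i)$. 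Distinctness of the chosen points is arranged by using relative interiors; the odd parity only makes the criterion nonvacuous, since for even $n$ collinear configurations are always nonunique by Proposition \ref{thm:oddset} and the trivial consistent pair $\{F,-F\}$ always exists, which is precisely why the even case is handled separately.

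For Part 2 I will use that strict convexity is equivalent to every face of $S$ being a single point. If $S$ is strictly convex and $A$ is not collinear, suppose $\ft(A)$ contained a segment; the necessity argument yields faces $F_i=\{u_i\}$ with $v\in\bigcap_i\operatorname{span}(F_i)=\bigcap_i\mathbb{R}u_i$, forcing each $x_i-p$ parallel to $v$ and hence $A$ collinear, a contradiction, so the solution is unique. Conversely, if $S$ is not strictly convex it has a face $F$ with $\dim\operatorname{span}(F)\ge 2$; let $\varphi$ cut out $F$, so $-\varphi$ cuts out $-F$ with $\operatorname{span}(-F)=\operatorname{span}(F)$. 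For even $n=2m$ choose $m$ distinct points of $\operatorname{relint}(F)$ and $m$ of $\operatorname{relint}(-F)$; the functionals sum to $m\varphi-m\varphi=0$, giving $0\in\ft(A)$, and since $F$ and $-F$ lie in the distinct parallel hyperplanes $\{\varphi=1\}$ and $\{\varphi=-1\}$ the points are not collinear. Any nonzero $v\in\operatorname{span}(F)$ and the lemma then produce a segment in $\ft(A)$, so the solution is nonunique.

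The principal obstacle, as stated, is the tangent-cone lemma linking a segment in the solution set to a common direction of the face spans; the secondary delicate points are keeping the constructed points distinct and non-collinear, and checking that in the even non-strictly-convex construction even a one-dimensional face $F$ yields a non-collinear configuration, which holds because $F$ and $-F$ sit on different parallel hyperplanes.
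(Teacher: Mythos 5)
Your proposal is correct and follows essentially the same route as the paper: both reduce to the cone description from Theorems \ref{thm:ftpoint} and \ref{thm:ftlocus}, use that the cone over a face $F_i$ spans $\operatorname{span}(F_i)$ and that a relative-interior choice of $x_i$ makes $x_i$ a relative-interior point of that cone, and handle the even case via the $\pm$ construction on a non-degenerate face versus one-dimensional cones under strict convexity. Your only real difference is organizational --- isolating the tangent-cone equivalence as an explicit two-sided lemma, which the paper argues inline.
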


\begin{proof}
Let $n$ be odd.

Sufficiency. Let us prove that if such a consistent set $A$ exists, then there exists a set of points for which the solution of the Fermat--Torricelli problem is not unique. Take one interior point from each face. Let these be points $x_1, \ldots, x_n$.
By the \ref{thm:ftpoint} theorem, the point $x=0$ is one of the solutions. By the \ref{thm:ftlocus} theorem, the complete solution is the intersection of the cones coming out of the vertices and containing the point $x=0$. Moreover, a $(k+1)$-dimensional cone emerges from the $k$-dimensional face. By assumption, there exists a line $l$ that belongs to the linear span of any face in $A$, that is, to the linear span of any cone. Then each of these cones contains some one-dimensional neighborhood of the point $x=0$ on the line $l$. Therefore, the intersection of all cones also contains a neighborhood of zero. The solution is not unique.

Necessity. Let the space $X$ contain $n$ points $x_1,\ldots,x_n$ for which the solution of the Fermat--Torricelli problem is not unique. Let the point $p$ be included in the solution. Then the solution for the points $x_i-p$ is also not unique, and $x=0$ is one of the solutions. By the \ref{thm:ftlocus} theorem, the set $\ft(x_1-p,\ldots,x_n-p)$ is the intersection of some cones $C_1,\ldots,C_n$ coming out of the points $x_i-p$. Let us consider the norming functionals by which the cones are constructed, namely, the intersections of the unit sphere with the supporting hyperplanes $\varphi_i=1$. We get $n$ faces, which obviously make up a consistent set. Since the solution is non-unique and is an intersection of cones, it contains some non-empty segment containing $x=0$. Each cone also contains this segment. Then the linear span of the cone contains a line containing this segment. Therefore, the intersection of the linear spans of all faces contains a line.

Let $n$ be even.

Let the norm be strictly convex. Then the solution of the Fermat--Torricelli problem for any $n$ points is the intersection of one-dimensional cones with vertices at these points. Since the vertices do not lie on the same line, the cones can only intersect at one point, that is, the solution is always unique.

Let the unit sphere contain a face that is not a point. Take in it any $\frac{n}{2}$ distinct interior points $x_1,\ldots,x_{\frac{n}{2}}$. Consider the solution set for $\pm x_1,\ldots,\pm x_{\frac{n}{2}}$. For each of the points, we define a functional whose level surface is a reference hyperplane defining the face to which the point belongs. Since the faces are opposite and contain the same number of points, the sum of the functionals is equal to zero. By the theorem, \ref{thm:ftpoint} $x=0$ is one of the solutions. The complete solution is the intersection of the cones. All cones have the same dimension, lie in the same subspace of the same dimension, and contain the point $x=0$ as an interior point. Thus, the intersection of all cones contains some neighborhood of zero of the corresponding dimension. The solution is not unique.

The theorem has been proven.
\end{proof}

\begin{remark}
Since the entire space is the linear span of a face of maximum dimension, in the condition of the theorem one can consider the intersection of only non-maximal faces. Thus, if a consistent set consists of only maximal faces, then the additional condition is automatically satisfied.
\end{remark}

If the dimension of the space $d$ is equal to $2$ or $3$, then the equivalent condition can be refined. Let's consider these two cases in more detail.

%%%%%%%%%%%%%%%%%%%%%%%%%%%%%%%%%
\subsection{Criterion for $n$ points on the plane}
%%%%%%%%%%%%%%%%%%%%%%%%%%%%%%%%%

Let $X$ be the Minkowski plane. The Fermat--Torricelli problem for $n$ points is posed in $X$. The unit circle $S$ has only two types of faces. We will call them points and flattenings.

\begin{pred}\label{thm:1cond}
If in $X$ there is a consistent set of faces of the unit circle, consisting of $n$ flattenings, then in this plane there are points $x_1,\ldots,x_n$ for which the set $\ft(x_1,\ldots,x_n)$ - is a non-degenerate polygon.
\end{pred}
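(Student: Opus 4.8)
The plan is to specialize the sufficiency argument of Theorem~\ref{thm:criteria} to the plane, exploiting the fact that in dimension two every flattening spans the whole space, so that the cones produced are full-dimensional. First I would pick, from each of the $n$ flattenings of the given consistent set, an interior point $x_i$, and let $\varphi_i$ be the associated norming functional whose level line $\varphi_i=1$ is the supporting line cutting $S$ exactly along that flattening. The points $x_i$ are distinct, lying in distinct faces. Since the set is consistent we have $\sum_{i=1}^n\varphi_i=0$, so Theorem~\ref{thm:ftpoint} gives that $x=0$ is a Fermat--Torricelli point for $A=\{x_1,\ldots,x_n\}$, and Theorem~\ref{thm:ftlocus} then identifies the full solution set as $\ft(A)=\bigcap_{i=1}^n C(x_i,\varphi_i)$.

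Next I would analyze a single cone. Because a flattening is a one-dimensional face, the set $\{a:\varphi_i(a)=\|a\|\}$ is the two-dimensional angular sector swept out by the rays through the endpoints of the flattening; as the contact segment of a supporting line with a bounded body having the origin as interior point, it subtends an angle strictly between $0$ and $\pi$. Hence $C(x_i,\varphi_i)=x_i-\{a:\varphi_i(a)=\|a\|\}$ is a genuine planar angular region with vertex $x_i$. The decisive observation is that $x_i$, being an \emph{interior} point of the flattening, lies in the interior of this sector; therefore $0=x_i-x_i$ is an interior point of every cone $C(x_i,\varphi_i)$.

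Finally I would combine these facts. Since $0$ is interior to each of the finitely many cones, the intersection $\bigcap_{i=1}^n C(x_i,\varphi_i)$ contains a neighborhood of $0$ and is therefore two-dimensional. Each cone is an intersection of two half-planes, so the whole intersection is a convex set cut out by $2n$ half-planes; by the proposition asserting that $\ft(A)$ is nonempty, compact and convex, it is also bounded. A bounded, two-dimensional, convex intersection of finitely many half-planes is precisely a non-degenerate polygon, which is the desired conclusion.

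The point to get right — more a matter of care than a genuine obstacle — is the interior-point argument: one must verify that choosing $x_i$ in the relative interior of the flattening really places it in the interior of the two-dimensional sector, so that $0$ becomes an interior, not merely a boundary, point of each cone. This is exactly what forces the intersection to be full-dimensional and hence non-degenerate; had $x_i$ been taken at an endpoint of the flattening the conclusion could fail. The remaining structural claims (convexity, boundedness, finitely many sides) are then routine consequences of the general properties of $\ft$ already recorded.
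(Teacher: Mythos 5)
Your proposal is correct and follows essentially the same route as the paper: choose interior points of the flattenings, apply Theorem~\ref{thm:ftpoint} to see that $0\in\ft(A)$, apply Theorem~\ref{thm:ftlocus} to write $\ft(A)$ as the intersection of the cones, and observe that interiority of each $x_i$ in its flattening makes $0$ an interior point of every (non-degenerate, angular) cone, so the intersection is full-dimensional. Your extra care at the end --- invoking boundedness of $\ft(A)$ and the fact that each cone is cut out by two half-planes to conclude the intersection is literally a polygon --- only makes explicit a step the paper leaves implicit.
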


\begin{proof}
Let $x_1,\ldots,x_n$ be interior flattening points from a consistent set. Then, taking the functionals $\varphi_1, \ldots, \varphi_n$ from the definition of \ref{thm:goodset}, by the theorem \ref{thm:ftpoint} we get that $p=0$ belongs to $\ft(x_1, \ldots, x_n)$. By the \ref{thm:ftlocus} theorem, $\ft(x_1,\ldots, x_n)$ is the intersection of the cones emerging from the points $x_1, \ldots, x_n$. Since all functionals contain flattenings, all cones are non-degenerate angles. Moreover, each of them contains some full-dimensional neighborhood of zero due to the fact that the points $x_i$ are not boundaries of flattenings. Therefore, the cones at the intersection form a polygon.
\end{proof}

\ig{pic2}{0.32}{fig:pic2}{The first non-uniqueness condition for $n=3$}

\begin{pred}\label{thm:2cond}
If in $X$ there is a consistent set of faces of the unit circle, consisting of $n-1$ flattening and one point, then in this plane there are $x_1,\ldots,x_n$ for which the set of solutions $\ft(x_1,\ldots ,x_n)$ is a non-degenerate segment.
\end{pred}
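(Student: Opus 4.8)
The plan is to mirror the proof of Proposition \ref{thm:1cond}, but to keep track of the one cone that degenerates into a ray. First I would choose the points: for each of the $n-1$ flattenings pick an interior point $x_1,\ldots,x_{n-1}$, and let $x_n=v$ be the single point-face. Taking the functionals $\varphi_1,\ldots,\varphi_n$ supplied by Definition \ref{thm:goodset}, whose sum vanishes, Theorem \ref{thm:ftpoint} gives $p=0\in\ft(x_1,\ldots,x_n)$, and Theorem \ref{thm:ftlocus} then yields $\ft(x_1,\ldots,x_n)=\bigcap_{i=1}^n C(x_i,\varphi_i)$.

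Next I would analyze the dimensions of the cones. For $i\le n-1$ the functional $\varphi_i$ norms a flattening, so $C(x_i,\varphi_i)$ is a genuine two-dimensional angle; and because $x_i$ is an interior point of its flattening, exactly as in Proposition \ref{thm:1cond}, this angle contains a full-dimensional neighborhood of $0$, i.e. $0\in\operatorname{int}C(x_i,\varphi_i)$. By contrast, since $v$ is a point-face the set $\{a:\varphi_n(a)=\|a\|\}$ is the single ray $\{tv:t\ge 0\}$, so $C(x_n,\varphi_n)=\{sv:s\le 1\}$ is a one-dimensional ray passing through $0$. Writing $Q=\bigcap_{i=1}^{n-1}C(x_i,\varphi_i)$, the solution set is $\ft=Q\cap C(x_n,\varphi_n)$, a convex subset of a line that contains a neighborhood of $0$ inside the ray; hence it is at least a non-degenerate segment, and the only thing left to rule out is that it is an unbounded ray.

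The main obstacle is precisely this boundedness. In the direction $+v$ the ray terminates at its endpoint $x_n=v$, so there is nothing to check there. In the direction $-v$ I would argue with recession cones: $-v$ is a recession direction of $C(x_i,\varphi_i)$ if and only if $v\in\{a:\varphi_i(a)=\|a\|\}$, that is, if and only if $\varphi_i(v)=\|v\|=1$. Evaluating $\sum_{i=1}^n\varphi_i=0$ at $v$ and using $\varphi_n(v)=\|v\|=1$ gives $\sum_{i=1}^{n-1}\varphi_i(v)=-1<0$, so some index $i\le n-1$ has $\varphi_i(v)<1$. For that index $-v$ is not a recession direction, and since the ray starts at $0\in\operatorname{int}C(x_i,\varphi_i)$ it must leave this cone at a finite parameter. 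Thus $\ft$ is cut off on the $-v$ side as well, so $\ft(x_1,\ldots,x_n)$ is a non-degenerate segment, which completes the proof.
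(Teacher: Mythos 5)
Your proposal is correct and follows essentially the same route as the paper's proof: interior points of the flattenings plus the point-face, Theorems \ref{thm:ftpoint} and \ref{thm:ftlocus}, and the observation that $n-1$ cones are full angles containing a neighborhood of $0$ while the last is a ray through $0$. Your extra recession-cone argument for boundedness is sound but not needed, since the paper has already recorded that $\ft(A)$ is compact.
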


\begin{proof}
Let the points $x_1,\ldots,x_{n-1}$ be interior flattening points, and $x_n$ be a point from a consistent set. Similarly to the previous proof, we get that $p=0$ belongs to $\ft(x_1,\ldots,x_n)$. The cones emerging from $x_1,\ldots,x_{n-1}$ contain a neighborhood of zero, and a ray passes from the point $x_n$ and passes through the origin. Consequently, at the intersection we obtain a non-degenerate segment.
\end{proof}

\begin{pred}\label{thm:3cond}
Let $2\leq k\leq n-1$. If there is a consistent set of faces of the unit circle in $X$, consisting of $k$ flattenings and $n-k$ points, and the linear span of the points is a straight line, then there are $x_1,\ldots,x_n$ in this plane for which the set of solutions $ \ft(x_1,\ldots,x_n)$ is a non-degenerate segment.
\end{pred}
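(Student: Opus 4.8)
The plan is to follow the template of the two preceding propositions, choosing the points $x_1,\ldots,x_n$ directly from the faces of the consistent set and then reading off $\ft(x_1,\ldots,x_n)$ as an intersection of cones. First I would take $x_1,\ldots,x_k$ to be interior points of the $k$ flattenings and let $x_{k+1},\ldots,x_n$ be the $n-k$ point-faces themselves. Feeding the accompanying functionals $\varphi_1,\ldots,\varphi_n$ from Definition~\ref{thm:goodset} into Theorem~\ref{thm:ftpoint} gives $p=0\in\ft(x_1,\ldots,x_n)$, after which Theorem~\ref{thm:ftlocus} expresses $\ft(x_1,\ldots,x_n)=\cap_{i=1}^n C(x_i,\varphi_i)$.

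The heart of the argument is to classify the two kinds of cones. Each flattening contributes a two-dimensional angle $C(x_i,\varphi_i)$ containing a full neighborhood of $0$, exactly as in Proposition~\ref{thm:1cond}, because $x_i$ was chosen interior to its flattening; hence the intersection $P$ of these $k$ angles still contains a planar neighborhood of $0$. Each point-face $\{v_i\}$ contributes instead a ray: since its supporting line norms only $v_i$, the set $\{a:\varphi_i(a)=\|a\|\}$ is the ray $\{tv_i:t\ge 0\}$, so $C(v_i,\varphi_i)=\{sv_i:s\le 1\}$ is a ray through $0$ lying in $\mathrm{span}(v_i)$. The hypothesis that the linear span of all point-faces is a single line $l$ forces every $v_i$ onto $l$, so all these rays lie in $l$ and each has $0$ in its relative interior (the origin lies strictly between the two points $\pm v$ of $l\cap S$).

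I would then combine the two parts. Writing $L$ for the intersection of the point-face rays, $L$ is a subinterval of $l$ with $0$ in its relative interior, and the locus is $\ft(x_1,\ldots,x_n)=P\cap L\subseteq l$. Because $P$ contains a planar neighborhood of $0$, the trace $P\cap l$ is a genuine relative neighborhood of $0$ in $l$, so $P\cap L$ still contains a non-degenerate subsegment of $l$ about $0$; in particular $\ft(x_1,\ldots,x_n)$ is at least one-dimensional, yet it is contained in the line $l$, hence exactly one-dimensional. Invoking the general fact that $\ft$ is always compact and convex, this one-dimensional convex set must be a bounded closed interval, i.e.\ a non-degenerate segment, which is the claim.

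The step I expect to be the main obstacle — and the only place the collinearity hypothesis is really used — is controlling the point-face cones: one must check that they genuinely confine the locus to the line $l$ (so the answer does not remain two-dimensional as in Proposition~\ref{thm:1cond}) while simultaneously keeping $0$ in their relative interior (so the answer does not collapse to the single point $0$). Boundedness is the other thing to pin down, since each ray making up $L$ is itself unbounded; here I would lean on the a priori compactness of $\ft$ rather than trying to bound $P$ directly.
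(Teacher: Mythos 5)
Your proposal is correct and follows essentially the same route as the paper: choose interior points of the flattenings together with the point-faces, apply Theorems~\ref{thm:ftpoint} and~\ref{thm:ftlocus}, and intersect the $k$ non-degenerate angles (each containing a planar neighborhood of $0$) with the $n-k$ rays confined to the line $l$. The only difference is that you spell out the boundedness step via the a priori compactness of $\ft$, which the paper leaves implicit in its one-line conclusion ``We get a segment.''
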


\begin{proof}
Let $x_1,\ldots,x_k$ be interior flattening points, and $x_{k+1},\ldots,x_n$ be points from a consistent set. The functionals $\varphi_1,\ldots,\varphi_n$ from the definition of \ref{thm:goodset} satisfy the \ref{thm:ftpoint} theorem, and the point $x=0$ belongs to $\ft(x_1,\ldots,x_n)$ . By the \ref{thm:ftlocus} theorem, the solution $\ft(x_1,x_2,x_3)$ is the intersection of $k$ rays containing the origin and lying on the same line, and $n-k$ non-degenerate angles, each of which contains a neighborhood of zero . We get a segment.
\end{proof}

\ig{pic3}{0.21}{fig:pic3}{The third non-uniqueness condition for $n=3$}

\begin{theorem}\label{thm:criterianx2}
If $n\geq 3$ is odd, then in the normed plane the solution of the Fermat--Torricelli problem is unique for any $n$ points if and only if any consistent set of faces of the unit circle consists of at most $n-2$ flattenings and the dimension of the linear span of face-points is equal to $2$.

If $n\geq 4$ is even, then in the normed plane the solution of the Fermat--Torricelli problem is unique for any $n$ points that do not lie on one straight line if and only if the norm is strictly convex.
\end{theorem}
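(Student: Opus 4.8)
The plan is to deduce both parts from the general criterion in Theorem~\ref{thm:criteria} by specializing to dimension $2$, where the unit circle has only two kinds of faces: points, whose linear span is a line through the origin, and flattenings, whose linear span is the whole plane. The even case then needs no new work, since for the plane the unit sphere is the unit circle and strict convexity of the sphere is exactly strict convexity of the norm; the statement is precisely the even half of Theorem~\ref{thm:criteria} read in $d=2$.

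For the odd case I would first translate the hypothesis of Theorem~\ref{thm:criteria} --- that the intersection of the linear spans of all faces of the consistent set contains a line --- into the language of points and flattenings. Since each flattening spans the whole plane, it contributes nothing to the intersection, so the intersection of all the spans equals the intersection of the lines spanned by the point-faces. This intersection contains a line precisely when either there are no point-faces, or all point-faces lie on a single line through the origin; in both cases the span of the point-faces has dimension at most $1$. Conversely, if the point-faces span the plane, the intersection of their lines is $\{0\}$ and contains no line. Hence, for a consistent set of $n$ faces, the span-intersection contains a line if and only if the span of its point-faces has dimension $\le 1$; and span-dimension $2$ forces at least two non-collinear point-faces, hence at most $n-2$ flattenings.

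Assembling this with Theorem~\ref{thm:criteria}, some $n$ points have a non-unique solution if and only if the plane admits a consistent set of $n$ faces whose span-intersection contains a line, i.e.\ if and only if some consistent set has point-faces of span-dimension $\le 1$. Negating, the solution is unique for all $n$ points exactly when every consistent set of $n$ faces has point-faces spanning the plane --- equivalently, at most $n-2$ flattenings together with a two-dimensional span of the point-faces --- which is the asserted criterion. The three non-unique regimes are realized explicitly by Propositions~\ref{thm:1cond}, \ref{thm:2cond} and \ref{thm:3cond} ($n$ flattenings give a polygon; $n-1$ flattenings and one point, or $k$ flattenings with $n-k$ collinear points for $2\le k\le n-1$, give a segment), so these furnish the concrete constructions behind the necessity direction.

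I expect the main obstacle to be careful bookkeeping of degenerate point-counts rather than any hard geometry: verifying the equivalence ``span-intersection contains a line $\iff$ point-span has dimension $\le 1$'' uniformly, including the boundary cases of zero or one point-face, and checking that the two conditions in the theorem really combine into the single clean requirement that the point-faces span the plane, with no non-unique configuration left unaccounted for by Propositions~\ref{thm:1cond}--\ref{thm:3cond}.
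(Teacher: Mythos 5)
Your proposal is correct, but it takes a genuinely different route from the paper. You derive the planar theorem as a direct corollary of Theorem~\ref{thm:criteria}: in dimension $2$ a flattening spans the whole plane and a face-point spans a line, so the intersection of the linear spans of a consistent set of $n$ faces contains a line exactly when the face-points span a subspace of dimension at most $1$ (including the degenerate cases of zero or one face-point), and negating this gives precisely ``at most $n-2$ flattenings and face-points of span dimension $2$.'' The paper instead re-runs the cone argument at the planar level: for sufficiency it takes a configuration with non-unique solution, observes that the solution set is either a polygon (forcing $n$ flattenings) or a segment (forcing the degenerate cones, hence the face-points, to lie on one line, with Proposition~\ref{thm:oddset} ruling out the all-rays case), and for necessity it invokes Propositions~\ref{thm:1cond}--\ref{thm:3cond} as explicit constructions. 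Your approach is more economical and makes the logical dependence on the general criterion transparent; the paper's approach is self-contained at the planar level and yields extra information, namely the classification of the non-unique solution sets into non-degenerate polygons (when all $n$ faces are flattenings) versus segments (in the remaining cases), which your corollary-style derivation does not recover. Two small points to make explicit if you write this up: the consistent sets quantified over in the statement must be understood as consistent sets of exactly $n$ faces (otherwise ``at most $n-2$ flattenings'' does not control the number of face-points), and the even case is, as you say, literally the $d=2$ instance of Theorem~\ref{thm:criteria}, which is also how the paper disposes of it.
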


\begin{proof}
In both statements, the necessity follows from the propositions \ref{thm:1cond}, \ref{thm:2cond}, and \ref{thm:3cond}.
Let us prove sufficiency. Let $n$ be odd and there are points $x_1,\ldots,x_n$ for which the set of solutions of the Fermat--Torricelli problem is not unique. Since the solution is the intersection of cones, it is either a polygon or a segment. In the case of a polygon, all cones are non-degenerate angles. Consider the functionals defining the cones. Their level lines contain flattenings of the unit circle, which obviously constitute a consistent set of faces.

A segment can be obtained by the intersection of at least one non-degenerate angle and rays lying on the same straight line. If all cones are rays, then all points of $x_i$ lie on the same straight line, but by the proposition \ref{thm:oddset} in this case the solution is unique. Similarly, we consider the functionals defining the cones. Since all degenerate cones lie on the same straight line, the points of intersection of the unit circle and the level lines of the functionals also lie on the same straight line. That is, the linear span of the points of the consistent set is equal to $1$.

The statement for even $n$ follows from the \ref{thm:criteria} theorem.

\end{proof}

%%%%%%%%%%%%%%%%%%%%%%%%%%%%%%%%%
\subsection{Criterion for $n$ points in three-dimensional space}
%%%%%%%%%%%%%%%%%%%%%%%%%%%%%%%%%

Let $X$ be a three-dimensional Minkowski space. The Fermat--Torricelli problem for $n$ points is posed in $X$.

\begin{theorem}\label{thm:criterianx3}
If $n\geq 3$ is odd, then there are $n$ points in the three-dimensional Minkowski space $X$ for which the solution of the Fermat--Torricelli problem is nonunique if and only if $X$ contains a consistent set of $n$ faces of the unit sphere $S$, for which the following conditions are satisfied:

\begin{itemize}
  \item The intersection of the linear spans of the face-points included in this consistent set contains a line,
  \item For any pair of face-point and face-segment included in this consistent set, the dimension of their linear span is $2$,
  \item The intersection of the linear shells of the segment faces included in this consistent set contains a straight line.
\end{itemize}

If $n\geq 4$ is even, then in three-dimensional Minkowski space the solution of the Fermat--Torricelli problem is unique for any $n$ points that do not lie on one straight line if and only if the norm is strictly convex.

\end{theorem}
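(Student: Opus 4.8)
The even case requires no separate argument: for $d=3$ its statement coincides verbatim with that of Theorem \ref{thm:criteria}, so uniqueness for non-collinear configurations is equivalent to strict convexity exactly as established there. The whole work lies in the odd case, and the plan is to show that the three displayed conditions are precisely the unpacking, in dimension three, of the single condition of Theorem \ref{thm:criteria}: nonuniqueness for some $n$ points is equivalent to the existence of a consistent set of $n$ faces of $S$ whose linear spans have a common line in their intersection. I would first invoke the Remark following Theorem \ref{thm:criteria} to discard the maximal (two-dimensional) faces, whose spans equal all of $X$ and therefore impose no constraint on the intersection. Only point-faces (span a line $\ell$ through the origin) and segment-faces (span a plane $P$ through the origin) remain relevant, so the problem reduces to a concrete question in $\mathbb{R}^3$: when does the intersection of a finite family of such lines and planes contain a line?

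The core of the argument is a short case analysis carried out in both directions. If the consistent set contains at least one point-face, then, each point-span being a line, a common line in the intersection forces all point-spans to coincide with a single line $\ell$—this is the first condition. For that same $\ell$ to survive the intersection it must lie in every segment-plane, i.e.\ each point-face must lie in each segment-plane; since a point-line and a segment-plane together span a subspace of dimension $2$ exactly when the point lies in the plane, this is the second condition, and then $\ell\subseteq P$ automatically for every segment-plane. If instead the set has no point-faces, the first two conditions are vacuous and one needs only that the segment-planes share a common line: two distinct planes through the origin in $\mathbb{R}^3$ meet in a line, so this is exactly the third condition. Conversely, assuming the three conditions, the same dichotomy produces an explicit line $\ell$ contained in every non-maximal span—taken as the common point-line when a point-face is present, and as the line guaranteed by the third condition otherwise—which by Theorem \ref{thm:criteria} yields a configuration of $n$ points with nonunique solution. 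I would close by recording the degenerate cases (no segment-faces, or only two-dimensional faces), in which the respective conditions hold vacuously and the construction still goes through.

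I expect the main obstacle to be organizing this case analysis so that both necessity and sufficiency are covered without gaps, and in particular verifying the interplay of the three conditions. When both point-faces and segment-faces are present the third condition is redundant, being forced by the first two (all point-lines equal $\ell$, and $\ell$ lies in every segment-plane, so the segment-planes automatically share $\ell$); it becomes indispensable precisely when no point-face is available to pin down the common line. The other delicate point is confirming that ``the linear span of a point-face together with a segment-face has dimension $2$'' is the correct encoding of ``the point lies in the segment's plane,'' since it is exactly this incidence that lets the candidate line $\ell$ persist through the full intersection of spans.
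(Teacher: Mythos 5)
Your proof is correct, but it takes a genuinely different route from the paper. You treat the theorem as a pure corollary of Theorem \ref{thm:criteria}: using the Remark to discard the maximal faces (whose spans are all of $X$), you show by a short linear-algebra case analysis that the conjunction of the three displayed conditions is exactly equivalent, in dimension three, to the single condition ``the intersection of the linear spans of all faces contains a line,'' and then both directions of the odd case follow at once from the general theorem. The paper instead re-runs the geometric argument from scratch: for sufficiency it builds the common line $\ell$ and the solution segment directly inside the intersection of the one-, two- and three-dimensional cones, and for necessity it extracts the three conditions from the cones of a configuration with a nonunique solution. Your reduction is shorter, makes the logical dependence on Theorem \ref{thm:criteria} transparent, and cleanly exposes that the third condition is redundant whenever a point-face is present (it is forced by the first two) and operative only when there are no point-faces; your observation that ``dimension of the span of a point-face and a segment-face equals $2$'' encodes exactly ``the point-line lies in the segment-plane'' is the right reading and is what makes the equivalence go through. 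What the paper's longer direct proof buys is an explicit description of how the solution set sits on the line $\ell$ (a segment obtained as the intersection of the cones), which is then reused in the subsequent analysis of the cube, dodecahedron and prism norms; if you adopt the reduction, you should still record that geometric picture, since the later propositions lean on it. The even case is handled identically in both: it is a verbatim restatement of the even case of Theorem \ref{thm:criteria}.
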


\begin{proof}
Let $n$ be odd.

Sufficiency. Let there be a consistent set of $n$ faces for which all conditions are satisfied. Let us construct a solution of the Fermat--Torricelli problem for a set of interior points of given faces. By the \ref{thm:ftpoint} theorem, the point $x=0$ is included in the solution. The complete solution is the intersection of some cones coming out of the vertices and containing $x=0$ as an interior point.

Let the consistent set have face-points. The linear span of such a face is a straight line containing the point itself and $x=0$. Based on the first condition imposed on a consistent set, this line coincides for all its points. Let this be a straight line $l$. The intersection of all one-dimensional cones coming out of face-points contains a non-empty segment lying in $l$ and containing $x=0$ as an interior point.

Linear spans of faces of the second type are planes containing two-dimensional cones emerging from these elements. By assumption, each such plane contains all point-faces, and hence the line $l$ on which they lie. There is a non-empty segment lying in $l$ and containing $x=0$ as an interior point, which lies in each of the two-dimensional cones. We get that the intersection of all one-dimensional and two-dimensional cones contains at least a segment. If there are no faces-points in a consistent set, then by assumption we take as a line $l$ any line lying at the intersection of the planes.

Since the intersection of three-dimensional cones includes some full-dimensional neighborhood of the point $x=0$, their intersection with the resulting segment is not unique. That is, the intersection of all cones contains at least a segment. Sufficiency has been proven.

Necessity. Let the space $X$ contain $n$ points $x_1,\ldots,x_n$ for which the solution of the Fermat--Torricelli problem is not unique. Let the point $p$ be included in the solution. Then the solution for the points $x_i-p$ is also not unique, and $x=0$ is one of the solutions. By the \ref{thm:ftlocus} theorem, the set $\ft(x_1-p,\ldots,x_n-p)$ is the intersection of some cones $C_1,\ldots,C_n$ coming out of the points $x_i-p$. Let us consider the norming functionals by which the cones are constructed, namely, the intersections of the unit sphere with the support planes $\varphi_i=1$. We obtain a consistent set of faces of the unit sphere. Let us determine what conditions are imposed on this set.

All one-dimensional cones coming out of point-faces must lie on the same straight line, otherwise their intersection will contain only the point $x=0$. In other words, the intersection of the linear spans of these faces is this straight line. Let this be a straight line $l$.

Two-dimensional cones emerge from the faces-segments. Since the solution is not unique, let their intersection contain a non-empty segment containing $x=0$. Let also this segment belong to the line $l$ if the consistent set contains points and such a line is constructed. Then the linear span of any face-segment, that is, the plane containing the corresponding cone, contains the line containing this segment. We have found that the intersection of the linear spans of the segment faces contains a straight line. In the presence of face-points, this will be a straight line $l$. Hence, a plane containing a face-segment contains any face-point. That is, the linear span of any pair of face-point and face-segment has dimension $2$. All conditions on the consistent set are satisfied.

The statement for even $n$ follows from the \ref{thm:criteria} theorem.

\end{proof}

%%%%%%%%%%%%%%%%%%%%%%%%%%%%%%
\section{Regular polyhedra and three-dimensional normed spaces}
%%%%%%%%%%%%%%%%%%%%%%%%%%%%%%

The article \cite{mywork} showed the application of the criterion on lambda planes --- normed planes defined by regular polygons. Consider the problem in some three-dimensional spaces.

\begin{ass}
There are only five regular polyhedra: tetrahedron, cube, octahedron, icosahedron and dodecahedron. Four of them, all except the tetrahedron, set the norm in three-dimensional space.
\end{ass}

Let's check whether the solution of the Fermat--Torricelli problem is unique for any three points in the given spaces. To do this, it is necessary to study the consistent sets of faces existing in them.

\begin{lem}\label{thm:lemma1}
If three faces of the unit sphere of the Minkowski space constitute a consistent set, then they are pairwise disjoint.
\end{lem}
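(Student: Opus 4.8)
The plan is to exploit the two defining features of a consistent set from Definition~\ref{thm:goodset}: the functionals sum to zero and each is a unit functional. Denote the three faces by $F_1,F_2,F_3$ and let $\varphi_1,\varphi_2,\varphi_3$ be the functionals supplied by the definition, so that $\varphi_1+\varphi_2+\varphi_3=0$. First I would record the precise meaning of the ingredients. Each $F_i$ is cut out on $S$ by the supporting hyperplane $\{\varphi_i=1\}$; because this hyperplane supports the unit ball, the functional satisfies $\|\varphi_i\|=1$, and every point $x\in F_i$ lies on the unit sphere, hence obeys $\|x\|=1$ together with $\varphi_i(x)=1$. This reduces the geometric statement to a one-line estimate in the dual pairing.

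Next I would argue by contradiction. Suppose two of the faces — say $F_1$ and $F_2$ — are not disjoint and pick a common point $x\in F_1\cap F_2$. Then $\varphi_1(x)=\varphi_2(x)=1$ while $\|x\|=1$. Evaluating the consistency relation at $x$ gives
\[
\varphi_3(x)=-\varphi_1(x)-\varphi_2(x)=-2.
\]
On the other hand $|\varphi_3(x)|\le\|\varphi_3\|\,\|x\|=1$, which is impossible. Hence $F_1\cap F_2=\emptyset$, and since the three indices enter symmetrically, the same computation applied to each pair $\{i,j\}$ (with the remaining functional $\varphi_k$ forced to take the value $-2$ at a unit vector) shows that the faces are pairwise disjoint.

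There is no serious obstacle here: the whole content sits in the single observation that the supporting-hyperplane condition forces $\|\varphi_i\|=1$, so that each $\varphi_i$ is bounded by $1$ in absolute value on $S$. The only point deserving care is to make sure this norm-one property is correctly extracted from Definition~\ref{thm:goodset} (and that face points genuinely realise $\varphi_i=\|x\|=1$), after which the contradiction is immediate and the lemma follows.
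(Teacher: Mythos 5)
Your proposal is correct and follows essentially the same argument as the paper: evaluate the zero-sum relation at a common point of two faces, forcing the third functional to take the value $-2$ at a unit vector, contradicting $\|\varphi_3\|=1$. Your write-up just makes the final norm bound slightly more explicit than the paper's phrasing (``it is not norming'').
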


\begin{proof}
Each face from the consistent set corresponds to a functional whose level surface contains the given face. The sum of the functionals of all faces is equal to zero.
If some two faces have a common point, then the sum of their functionals at this point is equal to $2$. Then the third functional takes the value $-2$ at this point, and it is not norming. Contradiction.
\end{proof}

\begin{pred}
In a normed space given by a regular cube, there are three points for which the solution of the Fermat--Torricelli problem is not unique. Moreover, if the solution is not unique, then it is a segment.
\end{pred}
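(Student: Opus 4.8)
The plan is to apply the criterion of Theorem~\ref{thm:criterianx3} in the case $n=3$ to the cube norm, so the entire argument reduces to exhibiting a consistent set of three faces satisfying the three listed conditions, and then showing that this consistent set cannot produce a full-dimensional (polyhedral) solution. The unit sphere here is the surface of a cube, whose faces come in three types: the six two-dimensional facets, the twelve one-dimensional edges, and the eight zero-dimensional vertices. By Lemma~\ref{thm:lemma1} any three faces forming a consistent set must be pairwise disjoint, which immediately rules out configurations whose faces share a vertex or edge and sharply limits the admissible triples.

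First I would look for the simplest consistent triple. The natural candidate is a set involving pairs of parallel facets: a functional norming a facet is $\pm e_i^*$ (up to the $\ell^\infty$ normalization), and opposite facets give functionals that are negatives of one another. Since we need the sum of three norming functionals to vanish and each functional is an extreme point of the dual ball (the dual of the cube is the octahedron/cross-polytope), the three functionals must be vertices of the octahedron summing to zero. I would enumerate these: three vertices $\varphi_1,\varphi_2,\varphi_3$ of the cross-polytope with $\varphi_1+\varphi_2+\varphi_3=0$. Because each cross-polytope vertex is $\pm e_i^*$, a zero sum of three of them is impossible using distinct axes (three standard basis covectors cannot cancel), so the only way is to repeat an axis with opposite signs plus a third that is forced to be zero --- which is not a vertex. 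This shows no consistent set of \emph{three facets} exists; hence any consistent triple must use lower-dimensional faces (edges or vertices), whose norming functionals are not octahedron vertices but lie on its edges or faces, giving the needed extra freedom.

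The key construction, then, is to produce a consistent set using edge-faces (and possibly vertex-faces) whose functionals sum to zero and which satisfy the three conditions. I would take, for instance, two opposite edges together with a third suitably chosen face, choosing the edges to be parallel translates related by the central symmetry of the cube so that their norming functionals cancel in pairs, then verify the geometric conditions: that the linear spans of the segment-faces intersect in a line (so that the two-dimensional cones share a common direction through the origin), and that the point-face/segment-face dimension condition holds if a vertex-face is present. Once such a triple is exhibited, Theorem~\ref{thm:criterianx3} guarantees three points with non-unique solution, establishing the first assertion.

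For the second assertion --- that a non-unique solution is always a segment and never a polygon or full-dimensional region --- I would argue that a two-dimensional (area) solution would require, by the geometric method, three two-dimensional cones whose intersection is full in a plane, forcing all three functionals to norm \emph{flattenings} (segment- or facet-type faces) spanning a common plane; I would then show, using the structure of the cube's faces and Lemma~\ref{thm:lemma1}, that no consistent triple of faces can yield three genuinely area-filling cones in a common plane (the available parallel pairs of two-dimensional cones coming from the cube's edges intersect only along a line, not an area). Thus the intersection of the three cones can grow beyond a point only along a single common line, giving a segment. \textbf{The main obstacle} I expect is this second part: cleanly classifying \emph{all} consistent triples of cube faces (up to the cube's symmetry group) and checking for each surviving type that the resulting cone intersection is at most one-dimensional. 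The combinatorial case analysis --- organizing the triples by face-type signature $(2,2,2)$, $(2,2,1)$, $(2,2,0)$, $(2,1,1)$, etc., and discarding those that violate the zero-sum or disjointness constraints --- is where the real work lies; the existence half is comparatively routine once a single valid triple is written down.
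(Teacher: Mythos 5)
Your overall plan coincides with the paper's: apply the three-dimensional criterion (Theorem \ref{thm:criterianx3}) with $n=3$, use Lemma \ref{thm:lemma1} to force pairwise disjointness, classify the consistent triples of cube faces by type signature, exhibit one triple of edges for existence, and check that every surviving triple yields at most a one-dimensional intersection of cones. However, there is a concrete error in the one construction you actually sketch. You propose two opposite (centrally symmetric) edges whose norming functionals ``cancel in pairs.'' If $\varphi_1+\varphi_2=0$, then the third functional must be $\varphi_3=0$, which is not a norming functional --- this is exactly the contradiction that kills the two-facet case, and it applies verbatim to your edge construction. The working example is three \emph{parallel} edges of the cube (two opposite edges of one square facet plus the central reflection of one of them), with the functionals for the symmetric pair chosen from their one-parameter families so that they do \emph{not} cancel: the paper reduces this to choosing functionals at three vertices of the square cross-section whose level lines meet appropriately on the extension of a diagonal. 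As written, your proposal never exhibits a valid consistent triple, so even the existence half is not established.

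The larger gap is the one you flag yourself: the exhaustive classification of consistent triples --- signatures $(2,2,\cdot)$, $(2,1,1)$, $(2,1,0)$, $(2,0,0)$, $(1,1,1)$ with non-parallel edges, $(1,1,0)$, $(1,0,0)$ --- is not carried out, and it is precisely this classification that proves the second assertion (non-unique implies segment). In the paper this occupies essentially the whole proof: the single-facet-plus-two-edges case requires an explicit computation showing the functional sum is nonzero on a certain line; non-parallel edge triples are excluded because their linear spans meet only in a point; edge-plus-vertex and edge-plus-two-vertices configurations are excluded by coplanarity forcing incidences forbidden by Lemma \ref{thm:lemma1}. Deferring all of this leaves the proposition unproven. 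One genuinely nice element of your sketch worth keeping: identifying facet functionals with vertices of the dual cross-polytope gives a clean one-line impossibility for the all-facet triple, though the paper's argument (two disjoint facets are opposite, hence their functionals cancel and the third vanishes) disposes of every signature containing two facets at once, which is more economical.
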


\begin{proof}
Let $X$ be the space under consideration. Consider in $X$ all possible variants of consistent sets.

Let there be a consistent set in $X$, and let at least two of them be two-dimensional faces. By the \ref{thm:lemma1} lemma, they have no common points. Hence, they are opposite, and the sum of their functionals is equal to zero. But then the third functional is zero. Contradiction.

Let a consistent set contain exactly one two-dimensional face. It cannot share points with either of the other two faces. If the remaining two faces are cube edges, then they belong to the opposite two-dimensional face and are not adjacent. The functional corresponding to a two-dimensional face takes the value $-1$ on both edges. Then each of the other two functionals is equal to zero on the other edge. This means that their level planes make an angle of $45$ degrees with the neighboring faces of the cube. The planes intersect in a straight line on which the sum of two functionals is equal to $2$. However, the first functional takes the value $-\frac{3}{2}$ on this line. The sum of functionals is not equal to zero. Contradiction. The second and third faces also cannot be an edge and a point or two points, since in this case their linear span does not contain a line, as it should be in the criterion condition.

Let the matched set consist of three edges. If some two edges belong to the same face, then the third necessarily belongs to the opposite one. Let it be symmetrical to one of the first two. Then the problem is reduced to constructing functionals for three vertices of the square that defines the norm on the plane. Such functionalities exist. Let the level lines of the functionals corresponding to the symmetric vertices intersect on the extension of the diagonal of the square at the point of the norm $2$. And the level line of the third functional passes through the remaining vertex perpendicular to this diagonal. Then the sum of the functionals is equal to zero. The functionals for the cube edges are constructed similarly. The intersection of the linear shells of the edges is a straight line. Taking three arbitrary interior points on these edges, in the solution we obtain a non-empty segment lying on this line.

If the third edge is non-parallel, then the intersection of the line spans of all edges is a point. If no pair of edges belongs to the same face, then again we get a point at the intersection of their linear envelopes. The criterion condition is not fulfilled.

If a consistent set consists of two edges and a point, then this point must lie in the same two-dimensional subspace with each of the edges. In this case, we get that the point is the end of one of the two edges. If a consistent set consists of one edge and two vertices, then the line passing through the vertices is the diagonal of the cube, and it must lie in the same plane as the edge, but there is no such edge.

Thus, in $X$ there are only consistent sets consisting of three parallel edges, and the solution of the Fermat--Torricelli problem is either a segment or a point. The statement is proven.

\end{proof}

\begin{pred}
In a normed space given by a regular octahedron, the solution of the Fermat--Torricelli problem is unique for any three points.
\end{pred}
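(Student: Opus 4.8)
The plan is to reduce everything to Theorem \ref{thm:criterianx3} with $n=3$: the solution is unique for every triple of points precisely when the unit sphere admits no consistent set of three faces satisfying the three listed conditions. So I would assume such a consistent set exists and derive a contradiction from the combinatorics of the octahedron. Concretely, I would fix coordinates so that the octahedron is the unit ball of $\|\cdot\|_1$, with vertices $\pm e_1,\pm e_2,\pm e_3$; then the dual ball is the cube $[-1,1]^3$ in the norm $\|\cdot\|_\infty$, and a norming functional of norm $1$ is just a boundary point of this cube. Under this duality the $2$-dimensional faces of the octahedron correspond to the $8$ vertices of the cube (all three coordinates equal to $\pm1$), the edges correspond to the open edges of the cube (exactly two coordinates $\pm1$), and the vertices correspond to the open facets of the cube (exactly one coordinate $\pm1$). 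A consistent set of three faces therefore amounts to three such boundary points $\varphi_1,\varphi_2,\varphi_3$ with $\varphi_1+\varphi_2+\varphi_3=0$.

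The engine of the argument is the following parity remark: reading $\varphi_1+\varphi_2+\varphi_3=0$ coordinatewise, in no single coordinate can all three functionals be \emph{saturated} (equal to $\pm1$), since three values from $\{-1,+1\}$ sum to an odd number. I would record two consequences. First, at most one face in the set is $2$-dimensional: two such faces give two cube vertices $u,v$, and $u+v=-\varphi_3$ with $\|\varphi_3\|_\infty\le1$ forces $v=-u$ and $\varphi_3=0$, which is not norming (consistent with Lemma \ref{thm:lemma1}, since disjoint $2$-faces of the octahedron are antipodal). Second, I would translate the three conditions of Theorem \ref{thm:criterianx3} into these coordinates: a vertex-face spans a coordinate axis and an edge-face spans the coordinate plane of its two saturated coordinates, so Condition~1 forces all vertex-faces onto one axis (hence at most two, antipodal), Condition~2 forces the axis of each vertex-face to lie in the plane of each edge-face, and Condition~3 becomes a statement about the coordinate planes carrying the edges.

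With these translations I would run through the possible type-vectors $(p,s,f)$ --- the numbers of vertex-, edge-, and face-type members --- of which, after discarding $f\ge2$, only finitely many remain. The case $(3,0,0)$ is killed by Condition~1, since the octahedron has no three collinear vertices. The case $(2,0,1)$ is killed by the parity remark: the two antipodal vertices cancel in their common axis coordinate, where the remaining face is saturated, leaving a nonzero sum. The mixed cases $(2,1,0)$, $(1,1,1)$ and $(1,2,0)$ are killed by Condition~2: forcing the vertex axis into every edge plane would require an edge (or the face) to be saturated in that axis coordinate as well, clashing with the vertex there by parity, so the edges are instead pushed into the coordinate plane missing that axis. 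The case $(0,2,1)$ is impossible outright, because the face saturates all three coordinates, leaving the two edges to occupy disjoint pairs of coordinates --- unavailable in three coordinates. The decisive case is $(0,3,0)$: parity forces each coordinate to be saturated by exactly two of the three edges, which makes the three edges carry the three \emph{distinct} coordinate planes, whose common intersection is only the origin, so Condition~3 fails. Since every case is excluded, no admissible consistent set exists and uniqueness follows.

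The main obstacle I anticipate is not any single computation but getting the translation of the three geometric conditions into the coordinate language exactly right, and then handling the two configurations that survive Conditions~1 and~2 vacuously, namely $(0,3,0)$ and $(0,2,1)$; their exclusion rests entirely on the parity remark plus the observation that three octahedron edges obeying parity necessarily lie in three different coordinate planes. I would double-check that the sign choices in the saturated coordinates open no escape route (they do not, since an edge's plane depends only on which coordinates are saturated, not on their signs), and that collinear triples are covered --- they are, since Proposition \ref{thm:oddset} already yields a single point for three collinear points.
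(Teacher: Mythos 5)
Your proof is correct, but it takes a genuinely different route from the paper's. The paper does not invoke the three-dimensional criterion at all: it observes that the octahedron norm is the $\ell_1$-norm, so $\sum_i\|a-a_i\|$ splits as $\sum_i|x-x_i|+\sum_i|y-y_i|+\sum_i|z-z_i|$, and the problem decouples into three independent one-dimensional median problems, each of which has a unique solution for three points by Proposition \ref{thm:oddset}. That argument is essentially one line, and it visibly extends to any odd number of points and to $\ell_1$ in any dimension. Your argument instead runs Theorem \ref{thm:criterianx3} through the dual cube $[-1,1]^3$: the type census $(p,s,f)$ is exhaustive (all ten compositions of $3$ are covered once $f\ge 2$ is discarded), the parity obstruction for coordinatewise sums of values in $\{-1,+1\}$ is sound, and the dictionary (vertex-face $\leftrightarrow$ one saturated coordinate and a coordinate axis as span; edge-face $\leftrightarrow$ two saturated coordinates and their coordinate plane as span; $2$-face $\leftrightarrow$ cube vertex) is exactly right, so each case correctly either admits no zero-sum triple or violates one of Conditions 1--3 --- the decisive $(0,3,0)$ case failing Condition 3 because the three coordinate planes meet only at the origin. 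What your version buys is a genuine workout of the criterion in the same style as the paper's cube and dodecahedron propositions, plus a classification of which consistent triples the octahedron can carry; what it costs is length and the need to get the face--dual-face dictionary and the vacuous-condition cases exactly right, all of which the paper sidesteps by exploiting the separability of the norm.
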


\begin{proof}
In a normed space, in which the unit sphere is a regular octahedron, the norm for a point with coordinates $a = (x,y,z)$ is given by the formula $\|a\|=|x|+|y|+|z|$. Then for the points $a_i = (x_i,y_i,z_i)$ and the point $a = (x,y,z)$ we have
$$\sum_{i=1}^n|aa_i|=\sum_{i=1}^n\|a-a_i\|=\sum_{i=1}^n(|x-x_i|+|y -y_i|+|z-z_i|)=\sum_{i=1}^n|x-x_i|+\sum_{i=1}^n|y-y_i|+\sum_{i=1}^n |z-z_i|$$
That is, the problem is reduced to one-dimensional, and according to \ref{thm:oddset} the solution for three points located on the same straight line is unique. The assertion has been proven.
\end{proof}

\begin{pred}
In a normed space given by a regular dodecahedron, there are three points for which the solution of the Fermat--Torricelli problem is not unique. Moreover, if the solution is not unique, then it is a segment.
\end{pred}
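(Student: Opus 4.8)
The plan is to apply Theorem~\ref{thm:criterianx3} with $n=3$, so that the whole question reduces to classifying the consistent sets of three faces of the dodecahedral unit sphere and reading off the shape of the corresponding cone--intersection. I would work in the dual picture: the polar of the dodecahedron is the icosahedron, so every norming functional is a boundary point of this icosahedron, with a pentagonal $2$-face corresponding to an icosahedral vertex, a dodecahedral edge to an icosahedral edge, and a dodecahedral vertex to an icosahedral (triangular) face. The consistency relation $\varphi_1+\varphi_2+\varphi_3=0$ then says that three boundary points of the icosahedron sum to zero; in particular $\varphi_3\in\operatorname{span}(\varphi_1,\varphi_2)$, so they are coplanar, and by Lemma~\ref{thm:lemma1} the three faces are pairwise disjoint. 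There are two things to establish: that some consistent triple satisfies the conditions of Theorem~\ref{thm:criterianx3} (nonuniqueness), and that every such triple yields a one--dimensional intersection (a segment).

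For existence I would exhibit an explicit witness built from a pair of opposite edges. Choose an icosahedral edge $E$ and two interior points $\psi,\psi'\in E$ whose difference $\psi'-\psi$ again lies on the boundary of the icosahedron; this is possible because $\psi'-\psi$ runs along the direction of $E$, which is a $2$-fold axis of the icosahedron and so meets the boundary at the midpoint of a transversal edge. Put $\varphi_1=\psi$, $\varphi_2=-\psi'$ and $\varphi_3=\psi'-\psi$, so that $\varphi_1+\varphi_2+\varphi_3=0$ automatically; here $\varphi_1,\varphi_2$ are norming for a dodecahedral edge and its antipode, and $\varphi_3$ is norming for a third edge. Taking interior points of these three edges, Theorem~\ref{thm:ftpoint} gives $0\in\ft$, and Theorem~\ref{thm:ftlocus} describes the solution as the intersection of the three (two--dimensional) cones. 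The antipodal pair spans a single plane $\Pi$, the third edge spans a plane $\Pi_3\neq\Pi$, so the intersection collapses onto the line $\Pi\cap\Pi_3$ and is a non-degenerate segment; in particular the solution is not unique.

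To prove that a nonunique solution is \emph{always} a segment, I would run through the types of a criterion--satisfying triple and eliminate those that could produce a two-- or three--dimensional intersection, i.e.\ those carrying a full (solid) cone or two coplanar cones; this is done with two facts about the dual icosahedron. First, two pentagonal faces cannot coexist in a consistent triple: their functionals are icosahedral vertices $\varphi_1,\varphi_2$, and either they are opposite (forcing $\varphi_3=0$) or the sum $\varphi_1+\varphi_2$ has dual norm strictly greater than $1$, so $-\varphi_3=\varphi_1+\varphi_2$ is not norming; three pentagons are likewise impossible, since three vectors of equal Euclidean length summing to zero are pairwise at $120^{\circ}$, whereas distinct icosahedral vertex directions meet only at $\arccos(\pm1/\sqrt5)$. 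Second, the direction along an icosahedral edge is a $2$-fold axis and meets the boundary only at the midpoint of a (transversal) edge, never at a vertex and never in the interior of a face. Hence a consistent triple containing an antipodal edge pair (whose functionals $\psi,-\psi'$ lie on one icosahedral edge, forcing the third functional $\psi'-\psi$ along that edge) can be completed only by a third \emph{edge}; completion by a pentagon or by a vertex is impossible. Since a central plane of the dodecahedron contains at most one antipodal pair of edges, this third edge is never coplanar with the pair, so every remaining cone lies in a transversal plane; the cone--intersection then has empty interior and is pinched onto the common line guaranteed by Theorem~\ref{thm:criterianx3}, i.e.\ a segment. The mixed configurations (one pentagon with non-coplanar edges, an edge with a vertex in its plane, antipodal vertices on a line, and so on) are handled identically: no two of their cones share a plane, so the intersection drops to a line.

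The main obstacle is the verification of the two dual facts, and especially the second: that the $2$-fold axis directions of the icosahedron are exactly the edge directions and pass through edge midpoints only, together with the statement that no central plane of the dodecahedron contains three edges. Both are concrete incidence and dual-norm computations on the icosahedron (cleanest in golden--ratio coordinates), and they are precisely what forces every nonunique solution down to dimension one. It is worth stressing the contrast with the cube, whose nonunique configurations came from three \emph{parallel} edges: the dodecahedron has no three mutually parallel edges --- its edges form antipodal pairs with pairwise distinct directions --- so here the witnessing configuration is necessarily an opposite pair together with a transversal edge.
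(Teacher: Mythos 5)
Your overall strategy matches the paper's --- classify the consistent triples of faces via Theorem~\ref{thm:criterianx3} and Lemma~\ref{thm:lemma1}, show the only surviving type is an antipodal edge pair plus a transversal edge, and read off a segment --- but you execute it in the polar picture (the dual icosahedron), which is a genuinely different and in places cleaner route. The paper works in the primal dodecahedron with explicit incidence checks in its $f_i,e_{i,j},v_{i,j,k}$ notation, and its one real computation is trigonometric: in the hexagonal cross-section of Figure~\ref{fig:pic11} it shows $\tan\beta=2$ and compares with the half dihedral angle $\frac12\arccos(-\tfrac{1}{\sqrt5})$ to certify that the required supporting planes exist. Your dual reformulation replaces that entire construction with the identity $\psi+(-\psi')+(\psi'-\psi)=0$ on an icosahedral edge, and replaces several of the paper's case eliminations with uniform dual-norm facts (two icosahedral vertices never sum to a unit vector; an icosahedral edge direction meets the boundary only at a transversal edge midpoint). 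You also make a simplification the paper does not exploit: for the ``always a segment'' half you only need to exclude configurations whose cones could intersect in dimension $\ge 2$, rather than proving, as the paper does, that the mixed configurations do not exist at all.

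There is one point where your argument currently has a hole, and it is exactly the spot where the paper's $\arctan 2>\frac12\arccos(-\tfrac{1}{\sqrt5})$ computation lives. You assert that interior points $\psi,\psi'$ of the icosahedral edge $E$ can be chosen with $\psi'-\psi$ \emph{on} the boundary; your justification only shows \emph{where} the ray in the direction of $E$ meets the boundary (at a transversal edge midpoint), not that this point is reachable as a difference of two interior points of $E$. That requires the quantitative inequality (Euclidean length of an icosahedral edge) $>$ (distance from the center to an edge midpoint), i.e.\ $2>\varphi$ in standard golden-ratio coordinates with edge length $2$. This is true and easy, but it is the load-bearing step --- if the inequality went the other way, no consistent triple of edges would exist and the proposition would fail --- so it must be stated and checked, not absorbed into ``concrete incidence and dual-norm computations.'' Likewise the claims that each central plane of the dodecahedron contains only one antipodal edge pair and that edge directions are the only $2$-fold axis directions should be verified in coordinates; they are correct, but they carry the whole ``no two cones share a plane'' conclusion.
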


\begin{proof}
Consider in $X$ all possible variants of consistent sets.
For convenience, we introduce notation and formulations for the elements of the dodecahedron. $f_i$ are used to designate two-dimensional faces. The edge belonging to the faces $f_i, f_j$ will be denoted by $e_{i,j}$, and the vertex belonging to the faces $f_i, f_j, f_k$ will be denoted by $v_{i,j,k}$. The faces $f_1$ and $f_{12}$ are called top and bottom respectively. Edges and vertices belong to the equator of the dodecahedron if they do not intersect with the top and bottom faces.

\ig{pic10}{0.35}{fig:pic10}{Regular dodecahedron sets the norm in three-dimensional space}

Assume that a consistent set contains at least two two-dimensional faces. By the \ref{thm:lemma1} lemma, they cannot be adjacent and opposite, so we will assume that these are the faces $f_1$ and $f_7$. If the third element of the consistent set is also a two-dimensional face, then it can only be $f_9$ or $f_{10}$. Since they are located symmetrically with respect to the first two faces, they simultaneously complement or do not complement them to a consistent set. But if they complement, then their functionals must match, and this is not true.

Let the third element be an edge. All edges except $e_{9,10}$ are symmetrical with respect to the two considered faces, so they cannot be included in the consistent set. The functional of the edge $e_{9,10}$ is equal to $-1$ at the vertex $v_{2,6,7}$. At the same vertex, the functional of $f_7$ is equal to $1$, but the functional of $f_1$ is not equal to zero. The sum of functionals is not equal to zero. 

Let the third element be a vertex. Similar to the previous reasoning, it suffices to check only the vertex $v_{4,9,10}$. In this case, at the vertex $v_{2,6,7}$ the sum of the functionals is again not equal to zero.

Consider the variant when the consistent set contains a single face $f_1$. Let the other two elements be edges. If one of the edges belongs to the bottom face, then the zero level of the functional of the second edge passes through it. But in this case it is one of the edges adjoining the top face, which is impossible by the \ref{thm:lemma1} lemma.
Let one of the edges connect the equator and the bottom face. But then any of the remaining edges has a symmetric relative to the considered face and edge and cannot complement them to a consistent set.
Then both edges must belong to the equator, and they are symmetrical about the center. The plane passing through these edges is the zero level of the $f_1$ face functional, but is not parallel to it, which is impossible.

Let the face $f_1$ be complemented by an edge and a vertex lying in the same plane. The subspaces defined by edges from the equator contain only vertices that are the ends of the edge itself and the opposite edge. Each of them is symmetrical to itself. Similarly, the edge of the bottom face also cannot be included in the consistent set. There are edges between the bottom face and the equator. It suffices to check the edge $e_{8,9}$ and the vertex $v_{5,6,11}$. In this case, on the line connecting the vertices $v_{5,6,11}$ and $v_{3,8,9}$ the face functional $f_1$ is equal to zero. But this line is not parallel to the plane of the face. Contradiction.

If the face $f_1$ is complemented by two symmetrical vertices, then this is necessarily a pair of vertices from the equator. But the straight line passing through them is not parallel to the plane of the face. A contradiction, since the face functional on this line must be equal to zero.

There are variants of consistent sets that do not contain a single face. Let us assume that a consistent set consists of three edges whose intersection of subspaces contains at least a line. Let two of them be opposite, for example $e_{1,4}$ and $e_{7,12}$. Their common two-dimensional subspace defines the zero level of the third edge functional. Only the edges $e_{2,6}$ and $e_{9,10}$ are parallel to this plane. They are symmetrical, so it suffices to consider the first.

\ig{pic11}{0.35}{fig:pic11}{}
Let the level of the functional $\varphi_1$ pass through the edge $e_{2,6}$ and make equal dihedral angles with neighboring faces. The functional levels $\varphi_2$ and $\varphi_3$ contain the edges $e_{1,4}$ and $e_{7,12}$ respectively. In the plane of these two edges, the sum of the functionals is equal to zero. Now consider a plane containing the edge $e_{2,6}$ and the midpoints of the edges $e_{1,4}$ and $e_{7,12}$. The section of the dodecahedron by this plane is shown in \ref{fig:pic11}. Since the sum of the functionals is equal to zero, the point of intersection of the level lines $\varphi_2=1$ and $\varphi_3=1$ belongs to the line $\varphi_1=-2$. Let $\alpha$ be the dihedral angle between the level $\varphi_1=0$ and the face, and $\beta$ be the angle between the levels $\varphi_1=0$ and $\varphi_2=1$. If $\beta >\alpha$, then such functionals exist, and so does a consistent set of corresponding edges. We will prove that this is indeed the case. The angle $\alpha$ is half the dihedral angle of a regular dodecahedron, i.e. $$\alpha=\frac{1}{2}\arccos{(-\frac{1}{\sqrt{5}})}$$
The lengths of the edges $e_{2,6}$ and $e_{9,10}$ are equal to $a=1$. The lengths of the remaining sides of the resulting hexagon in the section are equal to the height of a regular pentagon, that is, $b=\frac{\sqrt{5+2\sqrt{5}}}{2}$. Calculate the legs of a right triangle with hypotenuse $b$ and acute angle $\alpha$:
$$c=b\cos{\alpha}=\frac{1}{2}\sqrt{\frac{3+\sqrt{5}}{2}}, d=b\sin{\alpha}=\frac{1}{2}\sqrt{\frac{7+3\sqrt{5}}{2}}$$
Now we can calculate the tangent of the angle $\beta$:
$$\tan{\beta}=\frac{2d}{c+\frac{a}{2}}=\frac{\sqrt{\frac{7+3\sqrt{5}}{2}}}{ \frac{1}{2}\sqrt{\frac{3+\sqrt{5}}{2}}+\frac{1}{2}}=2$$
Since $\arctan{2}>\frac{1}{2}\arccos{(-\frac{1}{\sqrt{5}})}$, then $\beta>\alpha$, and the edges of the dodecahedron form a consistent set. Since the two-dimensional subspaces defined by these three edges intersect along a straight line, this set gives non-unique solutions to the Fermat--Torricelli problem, moreover, belonging to some straight line, that is, segments.

If among the three edges of the dodecahedron there is not a single pair of opposite ones, then the two-dimensional subspaces containing the edges intersect no more than at a point.

A consistent dodecahedron set cannot consist of two edges and a vertex, so in this case the vertex must lie in the same plane with each edge. We get that the edges are opposite, and the vertex is the end of one of the edges, which is impossible by the lemma \ref{thm:lemma1}.

Also, there is no consistent set of one edge and two vertices lying in the same plane. If so, then by the lemma \ref{thm:lemma1}, the vertices are the ends of the edge opposite the given one. Using the picture \ref{fig:pic11}, let's say that the edge $e_{2,6}$ is being considered, and the vertices are the endpoints of the edge $e_{9,10}$. Then the level lines of the functionals $\varphi_2$ and $\varphi_3$ pass through these vertices and intersect at the lines $\varphi_1=-2$. It can be seen from the figure that this is not possible.

Thus, the dodecahedron has only consistent sets consisting of three edges. Moreover, if a non-unique solution is obtained, then this is a segment. The assertion has been proven.

\end{proof}

\begin{pred}
If the unit sphere of a three-dimensional normed space is a prism, then for any $n\geq 2$ there are $n$ points for which the solution of the Fermat--Torricelli problem is not unique.
\end{pred}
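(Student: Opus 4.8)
The plan is to reduce everything to an explicit construction built on the product structure of the prism. Since Fermat--Torricelli uniqueness is invariant under invertible linear maps of $\mathbb{R}^3$, and a centrally symmetric prism with center at the origin is the image under such a map of a right prism $P\times[-1,1]$ over a centrally symmetric polygon $P\subset\mathbb{R}^2$, I may assume the unit ball is exactly $B=P\times[-1,1]$. Then the norm is $\|(u,z)\|=\max(\|u\|_P,|z|)$ and the dual norm is $\|(\psi,\gamma)\|_*=\|\psi\|_{P^*}+|\gamma|$, where $\|\cdot\|_P$ is the planar gauge of $P$. The guiding idea is that the vertical axis is a \emph{flat} direction: if a point lies in the open slab where its horizontal part dominates, then moving it vertically does not change its distance to the origin. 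I will exploit this to force the whole vertical segment through the origin into the solution set, which immediately yields non-uniqueness.

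Next I would choose the points. Place $x_i=(\hat x_i,c_i)$ with $\hat x_i$ a unit vector of $P$ and $|c_i|<1$, and take the horizontal functional $\varphi_i=(\psi_i,0)$, where $\psi_i$ is a norming functional of $\hat x_i$ in the planar norm. By the dual-norm formula $\varphi_i$ is a unit functional with $\varphi_i(x_i)=\|\hat x_i\|_P=\|x_i\|=1$, so $\varphi_i$ is norming for $x_i$. Hence the only condition needed for $\sum_{i}\varphi_i=0$ is the planar relation $\sum_i\psi_i=0$, and producing $n$ unit functionals of $P^{*}$ summing to zero is now purely two-dimensional: for even $n$ I take $n/2$ opposite pairs $\psi,-\psi$, and for odd $n$ I take one balanced triple together with $(n-3)/2$ opposite pairs. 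The substantive point is the \emph{triple}: I claim any Minkowski plane admits three unit functionals with $\psi_1+\psi_2+\psi_3=0$. Fixing a unit functional $\psi_1$ and writing $w=-\psi_1$, I look for a unit $\psi_2$ with $\|w-\psi_2\|_*=1$; along the dual unit circle the continuous function $\psi_2\mapsto\|w-\psi_2\|_*-1$ equals $-1$ at $\psi_2=w$ and $+1$ at $\psi_2=-w$, so by the intermediate value theorem it vanishes somewhere, giving $\psi_2$ and $\psi_3=w-\psi_2$. I expect this existence of a balanced triple to be the main obstacle, since it is the only part that is not a direct calculation; everything else is bookkeeping.

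Finally I would verify non-uniqueness. With $\sum_i\varphi_i=0$ and each $\varphi_i$ norming for $x_i\neq 0$, Theorem~\ref{thm:ftpoint} gives $0\in\ft(x_1,\dots,x_n)$, and $0\notin\{x_i\}$ because $\|x_i\|=1$. By Theorem~\ref{thm:ftlocus} the full solution is $\bigcap_i C(x_i,\varphi_i)$. For each $i$ and every $z$ with $|z|\le 1-|c_i|$ one computes $\|x_i-(0,0,z)\|=\max(\|\hat x_i\|_P,|c_i-z|)=1$ while $\varphi_i\bigl(x_i-(0,0,z)\bigr)=\psi_i(\hat x_i)=1$, so $x_i-(0,0,z)$ lies in $\{a:\varphi_i(a)=\|a\|\}$ and hence $(0,0,z)\in C(x_i,\varphi_i)$. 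Therefore every cone, and so their intersection, contains a nondegenerate segment of the vertical axis, and the solution is not unique. Choosing the $c_i$ small and distinct makes the $x_i$ pairwise distinct without affecting the argument. In the language of the criterion this is the expected picture: all lateral edges of a prism are parallel, so the linear spans of the corresponding one-dimensional faces meet along the vertical axis, which is exactly the condition appearing in Theorem~\ref{thm:criterianx3}.
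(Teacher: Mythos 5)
Your proof is correct, but it takes a genuinely different route from the paper's. The paper's argument is a short reduction: it takes a consistent set of faces in the normed plane whose unit circle is the base polygon, lifts the level lines of the corresponding functionals to vertical supporting planes of the prism, observes that the resulting faces form a consistent set whose linear spans all contain the axis of the prism, and invokes the three-dimensional criterion (Theorem \ref{thm:criterianx3}). You instead normalize the prism to a product $P\times[-1,1]$, compute the product norm and its dual explicitly, build norming functionals of the form $(\psi_i,0)$ with $\sum_i\psi_i=0$, and verify directly via Theorems \ref{thm:ftpoint} and \ref{thm:ftlocus} that a nondegenerate vertical segment lies in every cone. What your approach buys: it is self-contained (it does not rely on the criterion or on the notion of a consistent set), and it supplies a step the paper leaves implicit --- the existence, for every $n\geq 2$, of $n$ unit functionals of a normed plane summing to zero; your intermediate-value argument for the balanced triple is exactly the missing existence statement behind the paper's phrase ``we take any consistent set.'' What the paper's approach buys is brevity and a demonstration of how the criterion is meant to be applied. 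One cosmetic remark: for even $n$ the conclusion also follows at once from the even case of Theorem \ref{thm:criteria}, since a prism is not strictly convex; your construction has the advantage of covering both parities uniformly.
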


\begin{proof}
Consider a normed plane whose unit circle coincides with the base of the prism. In this normed plane, we take any consistent set and the functionals defining it. Let us draw the level lines of these functionals at the two bases of the prism. A unique set of support planes passes through them, defining $n$ functionals in the original space. Their sum is equal to zero, and the linear span of the faces contains a line. The condition of the criterion \ref{thm:criterianx3} is satisfied, and the assertion is proved.
\end{proof}

\label{end}

\end{document}